\tikzset{
  mid arrow/.style={
    postaction={
      decorate,
      decoration={
        markings,
        mark=at position 0.6 with {\arrow{>}}
      }
    }
  }
}
\newtheorem{thm}{Theorem}[section]
\newtheorem{cor}[thm]{Corollary}
\newtheorem{lem}[thm]{Lemma}
\theoremstyle{definition}
\newtheorem{defn}[thm]{Definition}
\numberwithin{equation}{section}
\DeclareMathOperator{\NN}{\mathbb {N}}
\DeclareMathOperator{\ZZ}{\mathbb {Z}}
\DeclareMathOperator{\RR}{\mathbb {R}}
\DeclareMathOperator{\supp}{supp}
\DeclareMathOperator{\NP}{NP}
\def\a {\mathbf a}
\def\b {\mathbf b}
\def\c {\mathbf c}
\def\e {\mathbf e}
\def\u {\mathbf u}
\def\v {\mathbf v}
\def\w {\mathbf w}
\def\x {\mathbf x}
\def\k {\mathrm k}
\begin{document}

\title[Normal edge-weighted graphs]{Normal edge ideals of edge-weighted graphs}

\author{Thanh Vu}
\address{Institute of Mathematics, VAST, 18 Hoang Quoc Viet, Hanoi, Vietnam}
\email{vuqthanh@gmail.com}

\author{Guangjun Zhu}
\address{School of Mathematical Sciences, Soochow University, Suzhou, Jiangsu, 215006, P.R. China}
\email{zhuguangjun@suda.edu.cn}

\subjclass[2010]{13B22, 13F65, 90C05}
\keywords{Integral closure; normal ideal; edge-weighted graph}

\date{}

\begin{abstract}
    We classify all normal edge ideals of edge-weighted graphs.
\end{abstract}

\maketitle

\section{Introduction}
\label{sect_intro}
Let $R = \k[x_1,\ldots,x_n]$ be a standard graded polynomial ring over a field $\k$. In \cite{MVZ}, Minh, Vu, and Zhu classified all integrally closed and normal edge ideals of weighted oriented graphs. In this work, we use the results developed in \cite{MVZ} to classify all normal edge ideals of edge-weighted graphs. Recall that a non-zero homogeneous ideal $I$ of $R$ is normal if $I^t = \overline{I^t}$ for all natural exponents $t \ge 1$, where $\overline{I^t}$ denotes the integral closure of $I^t$. We refer to \cite{SH} for criteria for the normality of general homogeneous ideals. Even when $I$ is a monomial ideal, determining the normality of $I$ is a difficult problem and has a deep connection to the theory of linear and integer programming \cite{DV, HL, HT, Sch1, Sch2}. 

Let us now recall the notion of edge ideals of edge-weighted graphs introduced by Paulsen and Sather-Wagstaff \cite{PS}. Let $G$ be a simple graph on the vertex set $V(G)=[n] = \{1,\ldots,n\}$ and the edge set $E(G)$. Assume that $\w:E(G)\rightarrow \ZZ_{>0}$ is a weight function on the edges of $G$. The edge ideal of the edge-weighted graph $(G,\w)$ is defined by
$$I(G,\w) = \big( (x_i x_j)^{\w(e)} \mid  e = \{i,j\} \in E(G)\big) \subseteq S=\k[x_1,\ldots,x_n].$$
If $\w(e) = 1$ for all edge $e \in E(G)$, then $I(G,\w) = I(G)$ is the usual edge ideal of $G$. For simplicity of notation, we call an edge-weighted graph $(G,\w)$ integrally closed (normal) if its edge ideal is. Ohsugi and Hibi \cite{OH} and Simis, Vasconcelos, and Villarreal \cite{SVV} computed the normalization of the edge ring of a simple graph. One can then deduce that the edge ideal of a simple graph $G$ is normal if and only if $G$ satisfies the odd cycle conditions. In terms of forbidden subgraphs, it says that $I(G)$ is normal if and only if $G$ does not have the disjoint union of two odd cycles as an induced subgraph. A squarefree monomial ideal is integrally closed. In particular, edge ideals of simple graphs are integrally closed. Nonetheless, most edge-weighted graphs are not integrally closed. Duan, Zhu, Cui, and Li \cite{DZCL} proved that $I(G,\w)$ is integrally closed if and only if $(G,\w)$ does not have the following configurations as induced weighted subgraphs, where the weights are all nontrivial, i.e., have weights greater than $1$.

\begin{center}
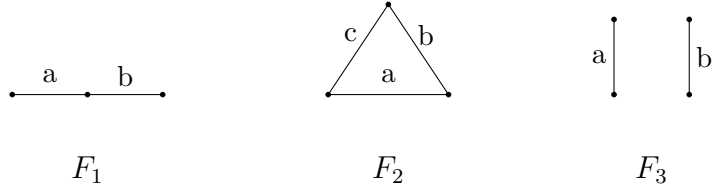

\begin{tikzpicture}

\draw[black, fill=black] (0,0) circle [radius=0.3mm];
\draw[black, fill=black] (1,0) circle [radius=0.3mm];
\draw[black, fill=black] (2,0) circle [radius=0.3mm];
\draw[-] (0,0) -- (1,0);
\draw[-] (1,0) -- (2,0);
\node at (0.5,0.25){\small a};
\node at (1.5,0.25){\small b};
\node at (1,-1) {$F_1$};

\draw[black, fill=black] (4.2,0) circle [radius=0.3mm];
\draw[black, fill=black] (5.8,0) circle [radius=0.3mm];
\draw[black, fill=black] (5,1.2) circle [radius=0.3mm];
\draw[-] (4.2,0) -- (5,1.2);
\draw[-] (5.8,0) -- (4.2,0);
\draw[-] (5.8,0) -- (5,1.2);
\node at (5,0.25){\small a};
\node at (5.5,0.8){\small b};
\node at (4.5,0.8){\small c};

\node at (5,-1) {$F_2$};

\draw[black, fill=black] (8,0) circle [radius=0.3mm];
\draw[black, fill=black] (9,0) circle [radius=0.3mm];
\draw[black, fill=black] (8,1) circle [radius=0.3mm];
\draw[black, fill=black] (9,1) circle [radius=0.3mm];

\draw[-] (8,0) -- (8,1);
\draw[-] (9,0) -- (9,1);
\node at (7.8,0.5){\small a};
\node at (9.2,0.5){\small b};
\node at (8.5,-1) {$F_3$};

\end{tikzpicture}
\captionof{figure}{Forbidden configurations for integrally closed edge-weighted graphs} 
  \label{fig:for1}
\end{center}

The normality of $I(G,\w)$ imposes further restrictions on $(G,\w)$. In the following figure, a triangle with a dashed line represents an induced odd cycle. When we do not specify the weight of the edge, we mean the weight could be any arbitrary positive integer. In particular, $F_4$ represents the disjoint union of an odd cycle with a nontrivial edge. $F_5$ represents two odd cycles with possibly only nontrivial edges connecting them.

\medskip

\begin{center}
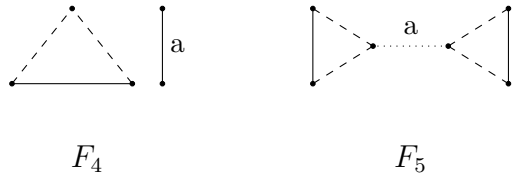

\begin{tikzpicture}

\draw[-] (0,0) -- (1.6,0);
\draw[dashed] (0,0) -- (0.8,1);
\draw[dashed] (1.6,0) -- (0.8,1);
\draw[-] (2,0) -- (2,1);
\draw[black, fill=black] (0,0) circle [radius=0.3mm];
\draw[black, fill=black] (1.6,0) circle [radius=0.3mm];
\draw[black, fill=black] (2,0) circle [radius=0.3mm];
\draw[black, fill=black] (0.8,1) circle [radius=0.3mm];
\draw[black, fill=black] (2,1) circle [radius=0.3mm];

\node at (2.2,0.5) {\small a};
\node at (1,-1) {$F_4$};

\draw[-] (4,0) -- (4,1);
\draw[dashed] (4,0) -- (4.8,0.5);
\draw[dashed] (4,1) -- (4.8,0.5);
\draw[dashed] (5.8,0.5) -- (6.6,1);
\draw[dashed] (5.8,0.5) -- (6.6,0);
\draw[dotted] (5.8,0.5) -- (4.8,0.5);
\draw[-] (6.6,1) -- (6.6,0);
\draw[black, fill=black] (4,0) circle [radius=0.3mm];
\draw[black, fill=black] (4,1) circle [radius=0.3mm];
\draw[black, fill=black] (4.8,0.5) circle [radius=0.3mm];
\draw[black, fill=black] (5.8,0.5) circle [radius=0.3mm];
\draw[black, fill=black] (6.6,1) circle [radius=0.3mm];
\draw[black, fill=black] (6.6,0) circle [radius=0.3mm];

\node at (5.3,0.75) {\small a};
\node at (5.3,-1) {$F_5$};

\end{tikzpicture}
\captionof{figure}{Further forbidden configurations for normal edge-weighted graphs} 
  \label{fig:for2}
\end{center}

\begin{thm}\label{thm_normality} Let $(G,\w)$ be an arbitrary edge-weighted graph. Then $I(G,\w)$ is normal if and only if  $(G,\w)$ does not have $F_1,\ldots,F_{5}$ as an induced edge-weighted subgraph.
\end{thm}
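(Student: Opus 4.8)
The plan is to prove both implications, using throughout the standard fact that normality of a monomial ideal is inherited by the ideals obtained by setting some variables equal to $1$; for edge ideals this means that if $I(G,\w)$ is normal then so is $I(G[W],\w)$ for every induced edge-weighted subgraph $G[W]$. Consequently the ``only if'' direction reduces to checking that each of the five configurations, viewed as a standalone edge-weighted graph, has a non-normal edge ideal, while the ``if'' direction is a genuine global statement about $(G,\w)$. The polyhedral description $\overline{I^t}=(\x^{\c}:\c\in t\cdot\NP(I)\cap\ZZ^n)$, where $\NP(I)$ is the Newton polyhedron, will be the common computational tool.

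For necessity I would argue as follows. If $(G,\w)$ contains $F_1$, $F_2$, or $F_3$, then by \cite{DZCL} the ideal is not integrally closed, i.e. $I\ne\overline{I}$, which is the case $t=1$ of normality; hence it is not normal. For $F_4$ and $F_5$ I would first observe, by a short case check, that if an odd cycle carries a nontrivial edge then that heavy edge, paired with the other distinguished nontrivial edge or with a second heavy cycle edge, already produces one of $F_1,F_2,F_3$; so I may assume the odd cycles are unweighted. For $F_5$, let $C_1,C_2$ be the two vertex-disjoint odd cycles of lengths $2p+1$ and $2q+1$, every edge between them being nontrivial, and test $m=\prod_{v\in V(C_1)\cup V(C_2)}x_v$ against $s=p+q+1$. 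Assigning the weight $1/2$ to every cycle edge shows the exponent vector $\c$ of $m$ lies in $s\cdot\NP(I)$, so $m\in\overline{I^s}$; on the other hand $m\in I^s$ would force $s$ generators with exponent sum at most $\mathbf 1$, and since a between-cycles edge has weight $\ge2$ it overshoots, so only weight-one cycle edges are usable, forcing a perfect matching of $C_1\sqcup C_2$, which is impossible for a disjoint union of odd cycles. For $F_4$, with $C=C_{2k+1}$ unweighted and a disjoint nontrivial edge $\{u,v\}$ of weight $a\ge2$, the analogous witness is $m=\big(\prod_{i\in V(C)}x_i\big)(x_ux_v)^{\lceil a/2\rceil}$ at the power $s=k+1$: assigning $1/2$ to every cycle edge and $1/2$ to $\{u,v\}$ exhibits $m\in\overline{I^{k+1}}$, the nontrivial edge supplying exactly the half-unit of budget needed to reach an integer power, while $\{u,v\}$ cannot be used in any expression $m\in I^{k+1}$ because its generator contributes $a>\lceil a/2\rceil$ to each of $u,v$; thus only cycle edges remain and a matching of size $k+1$ in $C_{2k+1}$ does not exist.

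For sufficiency I would assume $(G,\w)$ avoids all of $F_1,\ldots,F_5$ and prove that every power is integrally closed by induction on $t$. The base case $t=1$ is precisely integral closedness, which holds since $F_1,F_2,F_3$ are absent. For the inductive step it suffices to show that any lattice point $\c\in t\cdot\NP(I)$ admits an edge $e=\{i,j\}$ with $\w(e)(\e_i+\e_j)\le\c$ and $\c-\w(e)(\e_i+\e_j)\in(t-1)\cdot\NP(I)$; then $\x^{\c}=(x_ix_j)^{\w(e)}\x^{\c'}$ with $\x^{\c'}\in I^{t-1}$ by induction, so $\x^{\c}\in I^t$. To produce such a removable generator I would start from a fractional edge cover, i.e. weights $\mu_e\ge0$ with $\sum_e\mu_e=t$ and $\sum_{e\ni v}\mu_e\w(e)\le c_v$ for all $v$, whose existence is the content of $\c\in t\cdot\NP(I)$. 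The absence of $F_1,F_2,F_3$ forces the nontrivial edges into a rigid local pattern (adjacent nontrivial edges close up into a triangle with a trivial third edge, and disjoint nontrivial edges are joined by a trivial edge), which controls how the heavy generators interact with the cover; the absence of $F_4,F_5$ is exactly what rules out the odd-cycle obstructions to rounding $\mu$ to an integral cover. Combining these with the rounding lemmas of \cite{MVZ}, I would extract an integral generator that peels off while keeping the remainder inside $(t-1)\cdot\NP(I)$.

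The main obstacle is this inductive peeling step: turning the fractional cover $\mu$ into an honest product of generators. In the unweighted case this is the classical argument that the odd cycle condition implies normality, odd cycles being the only barrier to rounding; here the heavy edges create additional, weighted barriers, and the crux is to show that $F_1,\ldots,F_5$ are calibrated precisely so that every fractional cover of a lattice point of $t\cdot\NP(I)$ can be untangled into one whose support contains a fully integral generator. I expect the argument to proceed by choosing $\mu$ extremal, e.g. with inclusion-minimal support or maximal integral part, and then analyzing the residual half-integral and odd-support pieces, which is where the forbidden-subgraph hypotheses must be invoked most carefully.
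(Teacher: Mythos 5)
Your necessity argument is sound and is essentially the paper's: restriction to induced subgraphs preserves normality, the configurations $F_1,F_2,F_3$ are dispatched through failure of integral closedness, and after reducing to trivially weighted cycles your witness monomials for $F_4$ and $F_5$ (for instance $x_1\cdots x_{2k+1}(x_ux_v)^{\lceil a/2\rceil}\in\overline{I^{k+1}}\setminus I^{k+1}$, and the all-ones monomial on two odd cycles) are correct variants of the witnesses in Lemmas \ref{lem_F_4} and \ref{lem_F_5}; your parity/matching argument for non-membership is exactly the one used there.

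The sufficiency direction, however, has a genuine gap, and you name it yourself: the ``inductive peeling step'' is stated as an expectation, not proved. Reducing normality to the claim that every lattice point of $t\cdot\NP(I)$ admits a removable generator whose removal stays in $(t-1)\cdot\NP(I)$ is the standard framework (the paper works in the same framework via Lemma \ref{lem_criterion_integral_k}), but the entire content of the theorem lies in establishing that claim, and your proposal offers no mechanism beyond ``choose $\mu$ extremal and analyze the residual pieces,'' together with an appeal to rounding lemmas of \cite{MVZ} --- which concern weighted \emph{oriented} graphs and cannot be quoted as a black box here; the paper explicitly states that the combinatorics of the two settings differ and develops its own lemmas. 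What the paper actually does in place of the missing step is a chain of reductions, each with its own reallocation argument: (i) reduce to connected graphs, using absence of $F_3$ and $F_4$ to see that all other components are trivially weighted and bipartite; (ii) reduce to a \emph{single} nontrivial edge by explicitly redistributing the fractional coefficients so that one heavy edge gets coefficient zero, then inducting on the number of edges (Step 2, legitimized by Lemma \ref{lem_drop_non_trivial_edge}, which shows the forbidden-subgraph hypothesis survives deleting a heavy edge); (iii) eliminate even cycles by the reallocation Lemmas \ref{lem_drop_trivial_even_cycle} and \ref{lem_drop_non_trivial_even_cycle}, reducing to compact graphs in the sense of Wang--Lu; and (iv) invoke the classification of compact graphs (Theorem \ref{thm_compact_graphs}) and prove normality case by case in Lemma \ref{lem_normal_compact}, where the position of the heavy edge relative to the odd cycles --- on a cycle, adjacent to one, or joined to one by a trivial edge, the trichotomy forced by absence of $F_4$ --- dictates three different rounding schemes. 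None of this structure is present in your proposal, so the ``if'' direction remains unproven: precisely where the forbidden configurations $F_4,F_5$ must be converted into a positive combinatorial statement, the proposal stops.
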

Let us outline the idea of the proof of Theorem \ref{thm_normality}. First, for each configuration $F_1,\ldots,F_5$, we give an explicit monomial in the integral closure of a certain power of $I(G,\w)$ but not in that power itself. Now, assume that $(G,\w)$ does not have $F_1,\ldots,F_5$ as an induced edge-weighted subgraph. We follow the steps below to prove the normality of $I(G,\w)$ by induction on the number of vertices and the number of edges of $G$.
\begin{enumerate}
    \item Reduce to the case where $G$ is connected and $(G,\w)$ has no leaf edges with trivial weights.
    \item Prove that dropping a nontrivial edge of $(G,\w)$ results in a graph that does not have $F_1,\ldots,F_5$ as an induced edge-weighted subgraph.
    \item Reduce to the case where $G$ has only one nontrivial edge.
    \item Reduce to the case where $G$ has no even cycles. In other words, $G$ is compact in the sense of Wang and Lu \cite{WL}.
    \item Establish the normality for each connected compact graph having only one nontrivial edge and no $F_4,F_5$ as induced edge-weighted subgraphs.
\end{enumerate}
We would like to note that the steps are similar to those in \cite{MVZ}, though the combinatorics of the edge-weighted graphs and weighted oriented graphs are subtly different. Furthermore, the proofs in both cases are quite technical with many details so we decide to separate this work from \cite{MVZ} to better present the results.

\section{Integrally closed edge ideals of edge-weighted graphs}\label{sec_intclosed}

Throughout the paper, we denote by $S = \k[x_1,\ldots,x_n]$ a standard graded polynomial ring over an arbitrary field $\k$. We first recall the definition and some properties of the integral closure of monomial ideals; see \cite{MV} for more details. We then give a simple proof of a result of \cite{DZCL}, which classifies integrally closed edge-weighted graphs. 

\begin{defn} Let $I$ be a monomial ideal of $S$. The exponent set of $I$ is $E(I) = \{ \a \in \NN^n \mid x^\a \in I\}$. The Newton polyhedron of $I$, denoted by $\NP(I)$, is the convex hull of the exponent set of $I$ in $\RR^n$.
\end{defn}
The following result \cite[Exercise 4.23]{Ei} is standard.
\begin{lem}\label{lem_newton_polyhedra} Let $I$ be a monomial ideal of $S$. The integral closure of $I$ is a monomial ideal with exponent set $E(\overline{I}) = \NP(I) \cap \ZZ^n$.    
\end{lem}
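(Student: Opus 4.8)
The plan is to reduce the whole statement to a single pointwise criterion on monomials, namely
$$ x^\a \in \overline{I} \iff x^{k\a} \in I^k \ \text{ for some integer } k \ge 1, $$
and then to identify the right-hand side with the condition $\a \in \NP(I)$. First I would record that $\overline{I}$ is a monomial ideal. Since $I$ is monomial it is stable under the torus action $x_i \mapsto t_i x_i$ for $t \in (\k^*)^n$; integral closure is preserved by ring automorphisms, so $\overline{I}$ is torus-stable and hence monomial (after enlarging $\k$ to an infinite field if necessary, which does not affect exponent sets, as integral closure commutes with flat base change). Granting this, it suffices to decide for each $\a \in \NN^n$ whether $x^\a \in \overline{I}$, i.e. to compute $E(\overline I)$.

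Next I would prove the displayed criterion directly from the equational definition of integral dependence. For the implication $(\Leftarrow)$, if $x^{k\a} \in I^k$ then $(x^\a)^k - x^{k\a} = 0$ is a monic equation of integral dependence of $x^\a$ over $I$, its constant term lying in $I^k$, so $x^\a \in \overline{I}$. For $(\Rightarrow)$, take any equation $(x^\a)^m + a_1 (x^\a)^{m-1} + \cdots + a_m = 0$ with $a_j \in I^j$. The monomial $x^{m\a}$ produced by $(x^\a)^m$ must cancel against a term in some $a_j (x^\a)^{m-j}$ with $j \ge 1$; that term is $x^\c \cdot x^{(m-j)\a}$ for some monomial $x^\c$ appearing in $a_j$, and since $I^j$ is monomial we have $x^\c \in I^j$. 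Equating exponents gives $\c = j\a$, hence $x^{j\a} \in I^j$, which is the criterion with $k = j$.

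Finally I would translate the criterion into the Newton polyhedron. Writing $\b_1,\ldots,\b_s$ for the exponents of the minimal monomial generators of $I$, one has $E(I) = \bigcup_i (\b_i + \NN^n)$ and therefore $\NP(I) = \operatorname{conv}\{\b_1,\ldots,\b_s\} + \RR^n_{\ge 0}$, from which $\NP(I^k) = k\,\NP(I)$ follows by expanding the generators of $I^k$. If $x^{k\a} \in I^k$ then $k\a \in E(I^k) \subseteq \NP(I^k) = k\,\NP(I)$, so $\a \in \NP(I)$. Conversely, suppose $\a \in \NP(I) \cap \ZZ^n$. Because $\NP(I)$ is a rational polyhedron, $\a$ admits a representation $\a = \sum_i \lambda_i \b_i + \d$ with rational $\lambda_i \ge 0$, $\sum_i \lambda_i = 1$, and $\d = \a - \sum_i \lambda_i \b_i \in \RR^n_{\ge 0}$ itself rational. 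Choosing $k$ to be a common denominator of the $\lambda_i$, the integers $m_i := k\lambda_i$ satisfy $\sum_i m_i = k$, and $k\d = k\a - \sum_i m_i \b_i$ is an integer vector with nonnegative entries, i.e. $k\d \in \NN^n$. Hence $x^{k\a} = \bigl(\prod_i (x^{\b_i})^{m_i}\bigr)\, x^{k\d} \in I^k$, completing the equivalence and hence the identity $E(\overline I) = \NP(I)\cap\ZZ^n$.

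I expect the final paragraph to be the main obstacle: the inclusion $\NP(I)\cap\ZZ^n \subseteq E(\overline I)$ is precisely where one must use the \emph{rationality} of the polyhedron to clear denominators so that the remainder vector $k\d$ lands in the semigroup $\NN^n$, not merely in $\RR^n_{\ge 0}$. This is the step that distinguishes genuine integral dependence from the weaker membership $k\a \in \NP(I^k)\cap\ZZ^n$. The monomiality reduction, while routine, is worth stating carefully so that the argument is valid over a finite coefficient field.
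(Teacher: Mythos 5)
Your proof is correct, but there is nothing in the paper to compare it against line by line: the paper states this lemma as standard and simply cites \cite[Exercise 4.23]{Ei}, giving no proof. What you have written is a complete, self-contained version of the standard argument, organized around the pointwise criterion $x^\a \in \overline{I} \iff x^{k\a} \in I^k$ for some $k \ge 1$. All the essential steps check out: the cancellation argument for $(\Rightarrow)$ is sound (the coefficient of $x^{m\a}$ on the left is $1$, so some $a_j$ must contain the monomial $x^{j\a}$, and membership of a polynomial in the monomial ideal $I^j$ forces each of its monomials to lie in $I^j$); the identification $\NP(I) = \operatorname{conv}\{\b_1,\ldots,\b_s\} + \RR^n_{\ge 0}$ and the scaling $\NP(I^k) = k\,\NP(I)$ are correct; and you rightly isolate the crux, namely that rationality of the polyhedron lets you clear denominators so that $k\d = k\a - \sum_i m_i\b_i$ is an \emph{integer} nonnegative vector (note it is automatically integral once $k\lambda_i \in \ZZ$ and $\a \in \ZZ^n$, so nonnegativity is the only thing left to observe).

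One caveat: your justification of monomiality leans on the assertion that ``integral closure commutes with flat base change,'' which is false in that generality and should not be cited as such. Fortunately your own tools repair this without any external input: persistence of integral closure gives $\overline{I\,\k[x_1,\ldots,x_n]} \subseteq \overline{I\,\k'[x_1,\ldots,x_n]}$ for any field extension $\k \subseteq \k'$, and the reverse comparison on monomials follows from your criterion, since the membership $x^{k\a} \in I^k$ is visibly independent of the coefficient field; together these show the exponent sets agree and that every monomial of an element of $\overline{I}$ over $\k$ already lies in $\overline{I}$ over $\k$. Alternatively, you could avoid the torus argument entirely by invoking the fact that the integral closure of a $\ZZ^n$-graded ideal in a $\ZZ^n$-graded ring is again $\ZZ^n$-graded (see \cite{SH}), which works over an arbitrary field in one stroke.
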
 
For $\a = (a_1, \ldots, a_n),\b = (b_1, \ldots, b_n)\in \RR^n$, the notation $\a \ge \b$ means that $a_j \ge b_j$ for all $j = 1, \ldots, n$.
\begin{lem}\label{lem_criterion_integral} Let $I$ be a monomial ideal and $\a \in \NN^n$ be an exponent. Then $x^\a \in \overline{I}$ if and only if there exist nonnegative real numbers $c_j$ and exponents $\b_j$ of $E(I)$ corresponding to minimal generators of $I$ for $j = 1, \ldots, s$ such that $\sum_{j=1}^s c_j \ge 1$ and $\a \ge \sum_{j = 1}^s c_j \b_j$. In particular, when $x^\a$ is a minimal generator of $\overline{I}$, we can choose $c_j$ such that $\sum_{j=1}^s c_j =1$.
\end{lem}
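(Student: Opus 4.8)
The plan is to reduce everything to the description of the integral closure via the Newton polyhedron in Lemma \ref{lem_newton_polyhedra}, and then to unwind what membership in $\NP(I)$ means combinatorially. Since $\a \in \NN^n \subseteq \ZZ^n$, Lemma \ref{lem_newton_polyhedra} gives that $x^\a \in \overline{I}$ if and only if $\a \in \NP(I)$. Thus the whole statement amounts to characterizing the points of the Newton polyhedron, and the real content is to show that $\NP(I)$ is built using only the minimal generators together with the nonnegative orthant.

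First I would record the elementary but crucial fact that $E(I)$ is upward closed: if $\a \in E(I)$ and $\mathbf{r} \in \NN^n$, then $\a + \mathbf{r} \in E(I)$, because $x^{\a+\mathbf{r}} = x^{\mathbf{r}} x^{\a} \in I$. Writing $\b_1, \ldots, \b_m$ for the exponents of the minimal monomial generators of $I$, this gives $E(I) = \bigcup_{i=1}^m (\b_i + \NN^n)$. The key step, which I expect to be the main (though routine) obstacle, is to deduce the closed-form description
$$\NP(I) = \operatorname{conv}(\b_1, \ldots, \b_m) + \RR_{\ge 0}^n.$$
The inclusion $\supseteq$ holds because each $\b_i \in E(I) \subseteq \NP(I)$, the set $\NP(I)$ is convex, and its recession cone contains every coordinate direction $\e_k$ (since $\b_i + t\e_k \in E(I)$ for all $t \in \NN$). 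For $\subseteq$, the right-hand side is a convex set containing every generator $\b_i + \mathbf{r}$ with $\mathbf{r} \in \NN^n$, hence it contains $E(I)$ and therefore its convex hull $\NP(I)$. The one point requiring a little care is that the recession cone of $\NP(I)$ really is $\RR_{\ge 0}^n$ and nothing larger, but this is immediate from $E(I) \subseteq \NN^n$.

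With this description in hand the lemma is almost formal. By definition, $\a \in \operatorname{conv}(\b_1,\ldots,\b_m) + \RR_{\ge 0}^n$ if and only if there exist $\lambda_j \ge 0$ with $\sum_{j} \lambda_j = 1$ and $\a \ge \sum_j \lambda_j \b_j$; taking $c_j = \lambda_j$ gives one direction of the stated criterion. Conversely, given $c_j \ge 0$ with $C := \sum_j c_j \ge 1$ and $\a \ge \sum_j c_j \b_j$, I would rescale by setting $\lambda_j = c_j / C$, so that $\sum_j \lambda_j = 1$ and, since $C \ge 1$ and each $\b_j \ge \o$, one has $\sum_j \lambda_j \b_j = \tfrac{1}{C}\sum_j c_j \b_j \le \sum_j c_j \b_j \le \a$. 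Thus $\a \in \NP(I)$, which proves the equivalence. This same rescaling shows that the normalization $\sum_j c_j = 1$ can always be arranged, and in particular covers the case when $x^\a$ is a minimal generator of $\overline{I}$.
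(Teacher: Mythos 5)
Your proposal is correct and follows essentially the same route as the paper: the paper's proof is a one-liner invoking Lemma \ref{lem_newton_polyhedra} together with the upward-closedness of $E(I)$, which is exactly what you use, just spelled out via the explicit decomposition $\NP(I) = \operatorname{conv}(\b_1,\ldots,\b_m) + \RR_{\ge 0}^n$ and the rescaling argument. The only loose point is your recession-cone justification of the inclusion $\operatorname{conv}(\b_1,\ldots,\b_m) + \RR_{\ge 0}^n \subseteq \NP(I)$ (you verify the coordinate directions only at the points $\b_i$, not at arbitrary points of $\NP(I)$), but this is routine to repair by convexity and does not affect the validity of the argument.
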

\begin{proof}
    The conclusion follows from Lemma \ref{lem_newton_polyhedra} and the fact that for exponents $\a,\b \in \NN^n$ and a monomial ideal $I$, $\a \ge \b$ and $x^\b \in I$ implies that $x^\a \in I$.
\end{proof}
For a monomial $f$ in $S$, the support of $f$, denoted by $\supp (f)$, is the set of all indices $i \in [n]$ such that $x_i \mid f$. For a monomial ideal $J$ of $S$ and a subset $V$ of $[n]$, the restriction of $J$ to $V$, denoted by $J_V$, is 
$$J_V = (f \mid f \text{ is a minimal generator of } J \text{ such that } \supp f \subseteq V).$$

We have 
\begin{cor}\label{cor_restriction}
    Let $I$ be a monomial ideal. Then 
    $$ \overline{ I_V } = (\overline{I})_V.$$ 
\end{cor}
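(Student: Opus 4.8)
The plan is to prove both inclusions using the integral closure criterion of Lemma \ref{lem_criterion_integral}, with the key observation being that the support condition $\supp x^\a \subseteq V$ forces every minimal generator appearing with positive coefficient in a representation $\a \ge \sum_j c_j \b_j$ to itself have support in $V$. Throughout I would use that $I_V \subseteq I$ implies $\overline{I_V} \subseteq \overline{I}$, and that the minimal generators of $I_V$ are precisely the minimal generators of $I$ supported in $V$.

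For the inclusion $(\overline{I})_V \subseteq \overline{I_V}$, I would take a minimal generator $x^\a$ of $(\overline{I})_V$, so that $x^\a$ is a minimal generator of $\overline{I}$ with $\supp x^\a \subseteq V$. Applying Lemma \ref{lem_criterion_integral}, write $\a \ge \sum_{j} c_j \b_j$ with $\sum_j c_j = 1$, each $c_j \ge 0$, and each $\b_j$ an exponent of a minimal generator of $I$. Since $\a$ vanishes on every coordinate outside $V$ while the $c_j$ are nonnegative, any $\b_j$ with $c_j > 0$ must also vanish outside $V$; hence each such $x^{\b_j}$ is a minimal generator of $I_V$, and the same inequality together with Lemma \ref{lem_criterion_integral} shows $x^\a \in \overline{I_V}$.

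For the reverse inclusion, I would take a minimal generator $x^\a$ of $\overline{I_V}$ and first argue that $\supp x^\a \subseteq V$: writing $\a \ge \sum_j c_j \b_j$ with the $\b_j$ exponents of minimal generators of $I_V$ (hence supported in $V$), the truncation $\a'$ of $\a$ to the coordinates in $V$ still satisfies $\a' \ge \sum_j c_j \b_j$, so $x^{\a'} \in \overline{I_V}$; since $\a' \le \a$, minimality forces $\a' = \a$. Then $I_V \subseteq I$ gives $x^\a \in \overline{I}$, so $x^\a$ is divisible by some minimal generator $x^\gamma$ of $\overline{I}$ with $\gamma \le \a$. As $\supp x^\gamma \subseteq \supp x^\a \subseteq V$, this $x^\gamma$ lies in $(\overline{I})_V$, whence $x^\a \in (\overline{I})_V$.

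I expect the main obstacle to be the bookkeeping around supports rather than any deep idea: one must be careful that the convex-combination representation from Lemma \ref{lem_criterion_integral} uses exponents of \emph{minimal} generators, and that truncating an exponent to $V$ interacts correctly both with the inequality $\a \ge \sum_j c_j \b_j$ and with the notion of minimal generator of the restricted ideal. Once the support-localization observation is in place, both inclusions follow directly from the Newton polyhedron criterion.
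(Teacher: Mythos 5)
Your proof is correct, and it takes the route the paper intends: the paper states this corollary without any proof, as an immediate consequence of Lemma \ref{lem_criterion_integral}, and your two-inclusion argument—localizing the supports in the convex-combination representation for one direction, and using the truncation-plus-minimality argument for the other—is precisely the natural filling-in of those omitted details. Both inclusions are sound as written, including the key observations that the minimal generators of $I_V$ are exactly the minimal generators of $I$ supported in $V$, and that nonnegativity forces any $\b_j$ with $c_j>0$ to be supported in $V$.
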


We also fix the following notation throughout the paper. Let $G$ be a finite simple graph on the vertex set $V(G) = [n]$ and the edge set $E(G)$. For a vertex $i \in V(G)$, the neighborhood of $i$ is defined by 
$$N_G(i) = \{j \in V(G) \mid \{i,j\} \in E(G)\}.$$
The degree of a vertex $i$ is the cardinality of its neighborhood. A vertex of degree $1$ is called a leaf of $G$. Let $\w: E(G) \to \ZZ_+$ be a weight function on the edges of $G$. An edge $e \in E(G)$ is called a nontrivial edge if $\w(e) > 1$.

We now prove that the configurations $F_1, F_2, F_3$ give rise to non-integrally closed edge-weighted graphs. We denote by $\e_1,\ldots,\e_n$ the canonical basis of $\RR^n$.

\begin{lem}\label{lem_F_1} Assume that $I(G,\w) = (x_1^ax_2^a,x_2^bx_3^b)$ with $a,b> 1$. Then, $I(G,\w)$ is not integrally closed. 
\end{lem}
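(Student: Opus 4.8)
The plan is to use the Newton-polyhedron criterion of Lemma~\ref{lem_criterion_integral} to exhibit a single monomial lying in $\overline{I}$ but not in $I$, where $I = I(G,\w)$; this alone shows $I \neq \overline{I}$. Write $\b_1 = a\e_1 + a\e_2$ and $\b_2 = b\e_2 + b\e_3$ for the exponents of the two minimal generators. The natural candidate comes from their midpoint $\tfrac12(\b_1+\b_2) = \tfrac12 a\,\e_1 + \tfrac12(a+b)\,\e_2 + \tfrac12 b\,\e_3$: I would take $\a$ to be the least integer vector dominating it, namely
$$\a = \left\lceil \tfrac a2\right\rceil \e_1 + \left\lceil \tfrac{a+b}2\right\rceil \e_2 + \left\lceil \tfrac b2 \right\rceil \e_3,$$
and claim that $x^{\a}$ is the required witness.

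Membership $x^{\a}\in\overline I$ is the routine direction. Applying Lemma~\ref{lem_criterion_integral} with $s=2$, $c_1=c_2=\tfrac12$, and the generator exponents $\b_1,\b_2$, we have $c_1+c_2 = 1 \ge 1$, while the ceilings guarantee $\a \ge c_1\b_1 + c_2\b_2$ coordinatewise; hence $x^{\a}\in\overline I$.

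The only step requiring care, and the place where the hypothesis $a,b>1$ is essential, is verifying $x^{\a}\notin I$. Since $I$ is generated by $x^{\b_1}$ and $x^{\b_2}$, membership $x^{\a}\in I$ would force $\a\ge\b_1$ or $\a\ge\b_2$. The first coordinate of $\a$ is $\lceil a/2\rceil$ and its third is $\lceil b/2\rceil$, and for $a,b>1$ one has the strict inequalities $\lceil a/2\rceil < a$ and $\lceil b/2\rceil < b$. Thus $\a\not\ge\b_1$ (its first coordinate is too small) and $\a\not\ge\b_2$ (its third coordinate is too small), so $x^{\a}\notin I$ and $I$ is not integrally closed. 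I expect this to be the crux precisely because it isolates the role of nontriviality: if, say, $a=1$, then $\lceil a/2\rceil = 1 = a$, the rounding is no longer strict, and $x^{\a}$ falls back into $I$, consistent with the fact that $F_1$ with a trivial weight is not forbidden.
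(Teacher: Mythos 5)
Your proof is correct and follows essentially the same route as the paper: both take the midpoint $\tfrac12(\b_1+\b_2)$ of the two generator exponents, apply Lemma~\ref{lem_criterion_integral} with $c_1=c_2=\tfrac12$ to an integer point dominating it, and observe that nontriviality of the weights ($a,b>1$) prevents that point from dominating either generator. The only cosmetic difference is that the paper assumes $a\le b$ and uses the witness $x_1^{a-1}x_2^{b}x_3^{b-1}$, while your ceiling vector avoids the normalization; both work.
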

\begin{proof} We may assume that $a\le b$. We have $\u = (a,a,0)$, $\v = (0,b,b)  \in \NP(I(G,\w))$. Let 
$$\w = \frac{1}{2} (\u + \v) = \left ( \frac{a}{2}, \frac{a+b}{2}, \frac{b}{2} \right ).$$
By Lemma \ref{lem_criterion_integral} and the assumption that $a,b > 1$, we deduce that $x_1^{a-1} x_2^b x_3^{b-1} \in \overline{I(G,\w)}$. The conclusion follows.
\end{proof}

\begin{lem}\label{lem_F_2} Assume that $I(G,\w) = (x_1^ax_2^a,x_2^bx_3^b,x_1^cx_3^c)$ with $a,b,c > 1$. Then, $I(G,\w)$ is not integrally closed.
\end{lem}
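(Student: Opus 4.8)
The plan is to mimic the proof of Lemma~\ref{lem_F_1}, producing an explicit monomial that lies in the integral closure of $I(G,\w)$ but not in $I(G,\w)$ itself. The three generators have exponent vectors $\u = (a,a,0)$, $\v = (0,b,b)$, and $\p = (c,0,c)$ in $\NN^3$, all lying in $E(I(G,\w)) \subseteq \NP(I(G,\w))$. By Lemma~\ref{lem_criterion_integral}, a monomial $x^\w$ lies in $\overline{I(G,\w)}$ as soon as we can write $\w \ge c_1 \u + c_2 \v + c_3 \p$ with nonnegative reals $c_i$ summing to at least $1$. So first I would take the balanced convex combination $\frac{1}{2}(\u+\v+\p)$, i.e. $c_1=c_2=c_3=\frac{1}{2}$, which has total weight $\frac{3}{2}\ge 1$ and equals
$$\tfrac{1}{2}(\u+\v+\p) = \left(\tfrac{a+c}{2}, \tfrac{a+b}{2}, \tfrac{b+c}{2}\right).$$

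Next I would round each coordinate down to the nearest integer that still dominates this point, in order to land on an integer exponent inside $\NP(I(G,\w))$. The natural candidate is the monomial $x_1^{a+c-1} x_2^{a+b-1} x_3^{b+c-1}$, whose exponent vector $\a = (a+c-1, a+b-1, b+c-1)$ I would check satisfies $\a \ge \frac{1}{2}(\u+\v+\p)$ coordinatewise: for the first coordinate this reads $a+c-1 \ge \frac{a+c}{2}$, equivalently $a+c \ge 2$, which holds since $a,c>1$; the other two coordinates are symmetric and use $a+b\ge 2$ and $b+c\ge 2$. Hence by Lemma~\ref{lem_criterion_integral}, $x^\a \in \overline{I(G,\w)}$.

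Finally I would verify that $x^\a \notin I(G,\w)$, which shows $I(G,\w)$ is not integrally closed. Since $I(G,\w)$ is generated by $x^\u, x^\v, x^\p$, it suffices to check that $\a$ does not dominate any one of these three exponent vectors. Comparing $\a$ with $\u=(a,a,0)$, the third coordinate of $\a$ is $b+c-1$; but I must instead find a coordinate where $\a$ falls short, namely the third coordinate gives $b+c-1 \ge 0$ which does not obstruct, so the obstruction comes from a different coordinate — concretely $\a \not\ge \u$ fails only if some coordinate of $\a$ is smaller, so the cleaner check is that the first coordinate $a+c-1 < 2a$ and second $a+b-1<2\cdot$, which I would phrase directly by testing divisibility of $x^\a$ by each generator. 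The main subtlety, and the step I expect to require the most care, is precisely this non-membership check: I must confirm that none of $x_1^a x_2^a$, $x_2^b x_3^b$, $x_1^c x_3^c$ divides $x_1^{a+c-1} x_2^{a+b-1} x_3^{b+c-1}$. For divisibility by $x_1^a x_2^a$ one needs $a+c-1\ge a$ and $a+b-1\ge a$, i.e. $c\ge 1$ and $b\ge 1$, which \emph{do} hold, so I would need to sharpen the exponent choice — lowering one coordinate further so that the candidate monomial genuinely avoids all three generators while staying in the Newton polyhedron. I expect the correct monomial to be obtained by decreasing each exponent so that $x^\a$ is just barely outside $I(G,\w)$, and verifying that the convex-combination inequality of Lemma~\ref{lem_criterion_integral} still holds is where the hypotheses $a,b,c>1$ are used essentially.
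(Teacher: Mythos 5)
Your construction does not produce a witness: the monomial $x_1^{a+c-1}x_2^{a+b-1}x_3^{b+c-1}$ lies \emph{in} $I(G,\w)$, since $x_1^ax_2^a$ divides it ($a+c-1\ge a$ and $a+b-1\ge a$ because $b,c\ge 1$), and likewise each of the other two generators divides it. You notice this at the end, but the ``sharpened exponent choice'' you promise is never exhibited, and that is exactly the missing content of the proof. Worse, the symmetric combination cannot be repaired by any rounding: assuming $a\le b\le c$, the point $\frac{1}{2}(\u+\v+\p)=\left(\frac{a+c}{2},\frac{a+b}{2},\frac{b+c}{2}\right)$ already dominates $\u=(a,a,0)$ coordinatewise, so \emph{every} integer vector dominating it gives a monomial divisible by $x_1^ax_2^a$. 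Any genuine witness must therefore come from a different point of the Newton polyhedron, i.e.\ from a different convex combination, not from lowering coordinates of this one.

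The idea you are missing is to break the symmetry: order the weights so that $a\le b\le c$ and use only the two generators with the smallest weights, exactly as in Lemma \ref{lem_F_1}. With $c_1=c_2=\frac{1}{2}$ one gets $\frac{1}{2}(\u+\v)=\left(\frac{a}{2},\frac{a+b}{2},\frac{b}{2}\right)$, and since $a,b\ge 2$ and $a\le b$, the integer vector $(a-1,b,b-1)$ dominates it; hence $f=x_1^{a-1}x_2^bx_3^{b-1}\in\overline{I(G,\w)}$ by Lemma \ref{lem_criterion_integral}. Non-membership is then forced by the ordering: the $x_1$-exponent $a-1$ is smaller than both $a$ and $c$, ruling out divisibility by $x_1^ax_2^a$ and by $x_1^cx_3^c$, while the $x_3$-exponent $b-1<b$ rules out $x_2^bx_3^b$. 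This is the paper's proof; the hypothesis $c\ge a,b$ obtained from the ordering is precisely the leverage that your symmetric three-generator combination had no way to exploit.
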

\begin{proof} We may assume that $a\le b\le c$. We have $\u = (a,a,0)$, $\v = (0,b,b)$ and $\w = (c,0,c) \in \NP(I(G,\w))$. Let 
$$\x = \frac{1}{2} (\u + \v) = \left (\frac{a}{2}, \frac{a+b}{2}, \frac{b}{2} \right ).$$
Since $a,b \ge 2$, by Lemma \ref{lem_criterion_integral}, we deduce that $f = x_1^{a-1} x_2^{b} x_3^{b-1} \in \overline{I(G,\w)}$. Since $c \ge a,b$, $f \notin I(G,\w)$. The conclusion follows.   
\end{proof}

\begin{lem}\label{lem_F_3} Assume that $I(G,\w) = (x_1^ax_2^a,x_3^bx_4^b)$ with $a,b> 1$. Then, $I(G,\w)$ is not integrally closed. 
\end{lem}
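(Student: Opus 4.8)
The plan is to mimic the structure of the proofs for $F_1$ and $F_2$, producing an explicit monomial in the integral closure of a suitable power of $I(G,\w)$ that does not lie in that power itself. The key combinatorial feature of $F_3$ is that the two nontrivial edges $\{1,2\}$ and $\{3,4\}$ are vertex-disjoint, so the two generators $g_1 = x_1^a x_2^a$ and $g_2 = x_3^b x_4^b$ have disjoint supports. Because of this, unlike the $F_1$ and $F_2$ cases where the midpoint of two generators already witnesses failure of integral closure in the ideal itself, here I expect the averaged exponent vector to land in $\NP(I)$ but to dominate a monomial that genuinely fails to be in $I(G,\w)$ only when we compare against the actual generating monomials.

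First I would set $\u = (a,a,0,0)$ and $\v = (0,0,b,b)$, the exponent vectors of the two minimal generators, both of which lie in $\NP(I(G,\w))$. Then I would form the average $\frac{1}{2}(\u+\v) = \left(\frac{a}{2},\frac{a}{2},\frac{b}{2},\frac{b}{2}\right)$ and, invoking Lemma~\ref{lem_criterion_integral} with $c_1 = c_2 = \tfrac12$, argue that a monomial dominated by this point lies in $\overline{I(G,\w)}$. The natural candidate is $f = x_1^{\lceil a/2\rceil} x_2^{\lceil a/2\rceil} x_3^{\lceil b/2\rceil} x_4^{\lceil b/2\rceil}$, or more symmetrically a monomial whose exponent vector is coordinatewise at least the average; since $a,b > 1$ guarantees $\lceil a/2\rceil < a$ and $\lceil b/2\rceil < b$, the resulting $f$ is divisible by neither $g_1$ nor $g_2$, hence $f \notin I(G,\w)$.

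The main obstacle, and the step that needs the most care, is verifying that the chosen $f$ genuinely fails to lie in $I(G,\w)$ while still being dominated by a convex combination summing to at least $1$. Since $I(G,\w)$ has only two generators with disjoint supports, a monomial $x^\c$ lies in $I(G,\w)$ precisely when $c_1 \ge a$ and $c_2 \ge a$, or when $c_3 \ge b$ and $c_4 \ge b$; so I must choose the exponents of $f$ so that both of these disjunctive conditions fail, which is exactly what the strict inequalities $\lceil a/2\rceil < a$ and $\lceil b/2\rceil < b$ provide. The integral-closure membership is then immediate from Lemma~\ref{lem_criterion_integral}, since each coordinate of the exponent of $f$ is at least the corresponding coordinate of $\tfrac12(\u+\v)$ and the coefficients sum to exactly $1$. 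I expect the write-up to be the shortest of the three, essentially a direct check that the midpoint construction separates the integral closure from the ideal when the two edges are disjoint.
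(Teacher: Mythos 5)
Your proposal is correct and follows essentially the same route as the paper: both take the exponent vectors $\u=(a,a,0,0)$ and $\v=(0,0,b,b)$, form the midpoint $\frac12(\u+\v)$, and apply Lemma~\ref{lem_criterion_integral} to exhibit an integer point dominating it that is divisible by neither generator. The only difference is cosmetic—the paper's witness is $x_1^{a-1}x_2^{a-1}x_3^{b-1}x_4^{b-1}$ while yours uses $\lceil a/2\rceil$ and $\lceil b/2\rceil$ as exponents—and both choices work since each exceeds $a/2$ (resp.\ $b/2$) and is strictly less than $a$ (resp.\ $b$) when $a,b>1$.
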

\begin{proof} We have $\u = (a,a,0,0)$ and $\v = (0,0,b,b)  \in \NP(I(G,\w))$. Let  
$$\w = \frac{1}{2} (\u + \v) = \left ( \frac{a}{2}, \frac{a}{2},\frac{b}{2}, \frac{b}{2} \right ).$$
Since $a,b > 1$, by Lemma \ref{lem_criterion_integral}, we deduce that $x_1^{a-1} x_2^{a-1} x_3^{b-1} x_4^{b-1} \in \overline{I(G,\w)}$. The conclusion follows.
\end{proof}

\begin{thm}\label{thm_intclosed} Let $(G,\w)$ be an edge-weighted graph. Then $I(G,\w)$ is integrally closed if and only if $G$ does not have $F_1,F_2,F_3$ as an induced edge-weighted subgraph. 
\end{thm}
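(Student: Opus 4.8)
The plan is to prove both implications. For the forward direction I would argue the contrapositive: if $(G,\w)$ contains one of $F_1,F_2,F_3$ as an induced edge-weighted subgraph on a vertex set $V$, then the restriction $I(G,\w)_V$ is precisely one of the ideals treated in Lemmas \ref{lem_F_1}, \ref{lem_F_2}, \ref{lem_F_3}. Indeed, an induced subgraph keeps exactly the edges with both endpoints in $V$, so the minimal generators of $I(G,\w)$ supported on $V$ are exactly the edges of the configuration. Those lemmas give $I(G,\w)_V \ne \overline{I(G,\w)_V}$, while Corollary \ref{cor_restriction} gives $\overline{I(G,\w)_V} = (\overline{I(G,\w)})_V$. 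Hence if $I(G,\w)$ were integrally closed we would obtain $I(G,\w)_V = (\overline{I(G,\w)})_V = \overline{I(G,\w)_V}$, a contradiction.

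For the converse, suppose $(G,\w)$ has none of $F_1,F_2,F_3$; I want to show that every minimal generator $x^\a$ of $\overline{I(G,\w)}$ already lies in $I(G,\w)$. Assume not, and apply the minimal-generator form of Lemma \ref{lem_criterion_integral} to write $\a \ge \sum_e c_e\,\w(e)(\e_{u_e}+\e_{v_e})$ with $c_e \ge 0$, $\sum_e c_e = 1$, the sum taken over edges $e=\{u_e,v_e\}$. The assumption $x^\a \notin I(G,\w)$ means $x^\a$ is divisible by no generator; in particular, for every trivial edge $\{u,v\}$ one of $a_u,a_v$ vanishes, so $\supp(x^\a)$ contains no trivial edge. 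Since every edge $e$ with $c_e>0$ satisfies $\{u_e,v_e\}\subseteq \supp(x^\a)$, such an $e$ cannot be trivial; thus only nontrivial edges appear in the combination.

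The heart of the argument is then the structural description of the nontrivial edges forced by the hypothesis. Avoiding $F_3$ makes the nontrivial edges pairwise intersecting, so they form either a star at a common vertex or a triangle; a fully nontrivial triangle is exactly $F_2$, so once there are at least two nontrivial edges they form a star $\{i,j_1\},\ldots,\{i,j_m\}$, and avoiding $F_1$ forces each $\{j_p,j_q\}$ with $p\ne q$ to be an edge, necessarily trivial (else the triangle on $i,j_p,j_q$ would be $F_2$). Because these $\{j_p,j_q\}$ are trivial edges, $\supp(x^\a)$ can contain at most one $j_p$; as every term with $c_e>0$ forces its leaf $j_p$ into the support, exactly one coefficient is nonzero, say $c_{\{i,j_1\}}=1$. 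Then $\a \ge \w(\{i,j_1\})(\e_i+\e_{j_1})$ shows $x^\a$ is divisible by $(x_ix_{j_1})^{\w(\{i,j_1\})}\in I(G,\w)$, a contradiction; the cases of zero or one nontrivial edge are immediate from the same setup. I expect the main obstacle to be establishing and correctly invoking this classification of the nontrivial edges — the star-or-triangle dichotomy together with the trivial-clique base — since it is exactly what converts the combinatorial hypothesis into the arithmetic conclusion that a single generator divides $x^\a$.
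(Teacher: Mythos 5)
Your forward direction is correct (restriction to the configuration's vertex set, Corollary \ref{cor_restriction}, and Lemmas \ref{lem_F_1}--\ref{lem_F_3}), and the opening moves of your converse are also sound: every edge $e$ with $c_e>0$ has both endpoints in $\supp(x^\a)$, while no trivial edge can have both endpoints there. The gap is the structural claim that carries everything afterwards: ``avoiding $F_3$ makes the nontrivial edges pairwise intersecting.'' This is false, because $F_3$ is forbidden only as an \emph{induced} subgraph. Take $G$ to be the $4$-cycle $1$--$2$--$3$--$4$--$1$ with $\w(\{1,2\})>1$, $\w(\{3,4\})>1$ and $\w(\{2,3\})=\w(\{1,4\})=1$. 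The two nontrivial edges are disjoint, yet the only induced subgraph on their four vertices is the whole $4$-cycle, not $F_3$; one checks this $(G,\w)$ contains none of $F_1,F_2,F_3$ as induced edge-weighted subgraphs (so the theorem asserts it \emph{is} integrally closed). Your star-or-triangle dichotomy, and the entire case analysis built on it, simply does not apply to such graphs: the converse is unproved exactly when disjoint nontrivial edges are joined by trivial edges.

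The repair is to run the structural argument inside $\supp(x^\a)$ rather than on all of $G$. Let $H$ be the induced subgraph of $G$ on $\supp(x^\a)$; you have already shown every edge of $H$ is nontrivial. Then an induced path on three vertices in $H$ is an $F_1$; a triangle in $H$ is an $F_2$; and two disjoint edges of $H$ either form an induced $F_3$ or are joined by an edge of $H$, which is again nontrivial and produces an $F_1$ or $F_2$ on three of the vertices. Hence $H$ has at most one edge, so at most one $c_e$ is positive; since $\sum_e c_e=1$, exactly one equals $1$, and $\a \ge \w(e)(\e_{u_e}+\e_{v_e})$ gives the divisibility contradiction you wanted. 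For comparison, the paper argues dually: it inducts on $|V(G)|$ to reduce to the case $\supp f=[n]$ with no isolated vertices, and then shows that a graph on at least three vertices avoiding $F_1,F_2,F_3$ must contain a \emph{trivial} edge, whose generator $x_ix_j$ then divides $f$. The two routes are the same idea seen from opposite sides, but the paper's version (like the corrected one above) localizes the combinatorics to the support of $f$, which is precisely the step your write-up is missing.
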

\begin{proof} By Lemmas \ref{lem_F_1}, \ref{lem_F_2}, and \ref{lem_F_3}, we may assume that $(G,\w)$ does not have $F_1,F_2,F_3$ as induced edge-weighted subgraphs. We need to prove that $\overline{I(G,\w)} = I(G,\w)$. We prove this by induction on $|V(G)|$. The base case where $|V(G)| \le 2$ is clear. Hence, we may assume that $|V(G)| \ge 3$. Let $f=x^\a$ be a minimal generator of $\overline{I(G,\w)}$. By induction and Corollary \ref{cor_restriction}, we may assume that $G$ does not have isolated vertices and $\supp f = [n]$. 

Since $n \ge 3$ and $(G,\w)$ has no $F_1,F_2,F_3$ as an induced weighted subgraph, we deduce that $(G,\w)$ must have an edge with trivial weight. Let $\{i,j\}$ be an edge of $G$ with trivial weight. Then $x_ix_j \in I(G,\w)$. Since $\supp f = [n]$, $f$ is divisible by $x_ix_j$. Hence, $f \in I(G,\w)$. The conclusion follows.
\end{proof}

\section{Normal edge ideals of edge-weighted graphs}\label{sec_normal}
In this section, we prove Theorem \ref{thm_normality}. First, we have some simple lemmas.

\begin{lem}\label{lem_criterion_integral_k} Let $I\subseteq S$ be a monomial ideal and $\a \in \NN^n$ be an exponent. Then $x^\a \in \overline{I^k}$ if and only if there exist nonnegative real numbers $c_j$ and exponents $\b_j$ of $E(I)$ for $j = 1, \ldots,s$ such that $\sum_{j=1}^s c_j \ge k$ and $\a \ge \sum_{j=1}^s c_j \b_j$. In particular, when $x^\a$ is a minimal generator of $\overline{I^k}$, we can choose $c_j$ such that $\sum_{j=1}^s c_j =k$.
\end{lem}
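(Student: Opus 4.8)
The plan is to generalize Lemma~\ref{lem_criterion_integral}, which handles the case $k=1$, to arbitrary powers $I^k$. The key observation is that the integral closure of a power is governed by the Newton polyhedron of that power, and that for a monomial ideal the Newton polyhedra of successive powers are related by scaling: one has the identity $\NP(I^k) = k \cdot \NP(I)$, where $k \cdot \NP(I) = \{k\mathbf{w} \mid \mathbf{w} \in \NP(I)\}$. First I would record this scaling identity. It follows from the fact that the exponent set $E(I^k)$ consists of sums $\b_{j_1} + \cdots + \b_{j_k}$ (plus a nonnegative vector) of $k$ generator-exponents of $I$, so the vertices of $\NP(I^k)$ are $k$-fold sums of vertices of $\NP(I)$; taking convex hulls gives exactly the $k$-dilate of $\NP(I)$. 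Alternatively, and more cleanly, one can cite the general fact that $\overline{I^k}$ has exponent set $\NP(I^k)\cap\ZZ^n$, which is the $k=1$ statement of Lemma~\ref{lem_newton_polyhedra} applied to the ideal $I^k$.

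Next I would translate the membership $x^\a \in \overline{I^k}$ through this scaling. By Lemma~\ref{lem_newton_polyhedra} applied to $I^k$, we have $x^\a \in \overline{I^k}$ if and only if $\a \in \NP(I^k)\cap\ZZ^n$, equivalently $\a \in k\cdot\NP(I)$ up to the componentwise-dominance relation recorded in Lemma~\ref{lem_criterion_integral}. Concretely, $\a$ dominates a point of $k\cdot\NP(I)$ if and only if $\tfrac{1}{k}\a$ dominates a point of $\NP(I)$ (dominance is preserved under positive scaling). Applying the $k=1$ criterion of Lemma~\ref{lem_criterion_integral} to $\tfrac{1}{k}\a$, there exist nonnegative reals $d_j$ with $\sum_j d_j \ge 1$ and exponents $\b_j$ of minimal generators with $\tfrac1k\a \ge \sum_j d_j\b_j$. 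Rescaling by setting $c_j = k\,d_j$ yields $\sum_j c_j \ge k$ and $\a \ge \sum_j c_j \b_j$, which is exactly the claimed criterion; the reverse implication is identical upon dividing by $k$.

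For the final ``in particular'' clause, I would argue that when $x^\a$ is a minimal generator of $\overline{I^k}$ one can arrange $\sum_j c_j = k$ rather than merely $\ge k$. The point is that if $\sum_j c_j > k$, one can shrink the coefficients: because $\a$ is a minimal generator, no smaller exponent lies in $\overline{I^k}$, and $\a$ lies on the boundary of $\NP(I^k)$, so the supporting representation can be taken with total weight exactly $k$. I would make this precise by scaling each $c_j$ down by the factor $k/\sum_j c_j < 1$; this keeps $\a \ge \sum_j (k/\sum c_\ell)\,c_j\b_j$ since the $\b_j$ are nonnegative, and now the total coefficient is exactly $k$. This mirrors verbatim the corresponding clause in Lemma~\ref{lem_criterion_integral}.

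The main obstacle, if any, is the scaling identity $\NP(I^k) = k\cdot\NP(I)$: one must be careful that Newton polyhedra are unbounded (they equal a polytope plus the positive orthant $\RR^n_{\ge 0}$), so the dilation must be checked to interact correctly with the recession cone $\RR^n_{\ge 0}$, which is scale-invariant. Once that is settled, the proof is a purely formal rescaling of Lemma~\ref{lem_criterion_integral}, so I expect no further difficulty; indeed the statement is essentially Lemma~\ref{lem_criterion_integral} with ``$1$'' replaced by ``$k$'' throughout, and the cleanest writeup may simply be to apply Lemma~\ref{lem_criterion_integral} to the ideal $I^k$ and observe that the minimal generators of $I^k$ are products of $k$ minimal generators of $I$.
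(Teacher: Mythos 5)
Your proposal is correct and follows essentially the same route as the paper: the paper's proof is a one-line citation of Lemma \ref{lem_newton_polyhedra}, Lemma \ref{lem_criterion_integral}, and precisely your scaling fact that $\v \in \NP(I^k)$ if and only if $\frac{1}{k}\v \in \NP(I)$. Your fleshed-out rescaling of the coefficients $c_j = k\,d_j$, and the shrink-to-equality argument for the ``in particular'' clause, are exactly the details the paper leaves implicit.
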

\begin{proof}
    The conclusion follows from Lemma \ref{lem_newton_polyhedra}, Lemma \ref{lem_criterion_integral}, and the fact that a vector $\v \in \RR^n$ belongs to $\NP(I^k)$ if and only if $\frac{1}{k}\v \in \NP(I)$.
\end{proof}

\begin{lem}\label{lem_res_normal} Let $I\subseteq S$ be a monomial ideal and $V \subseteq [n]$. Assume that $I$ is normal. Then $I_V$ is normal.
\end{lem}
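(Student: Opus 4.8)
The plan is to prove that normality descends to restrictions, i.e. if $I$ is normal then so is $I_V$ for any $V \subseteq [n]$. The key tool will be Corollary \ref{cor_restriction}, which commutes integral closure with restriction, together with the observation that restriction commutes with taking powers of a monomial ideal. The claim to establish first is the combinatorial identity $(I^k)_V = (I_V)^k$ for every $k \ge 1$. Once this is in hand, the normality of $I_V$ follows by a short chain of equalities.

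Let me sketch the chain. Fix $k \ge 1$. I would compute
\[
\overline{(I_V)^k} = \overline{(I^k)_V} = (\overline{I^k})_V = (I^k)_V = (I_V)^k,
\]
where the first equality uses $(I_V)^k = (I^k)_V$, the second is Corollary \ref{cor_restriction} applied to the monomial ideal $I^k$, the third uses the hypothesis that $I$ is normal (so $\overline{I^k} = I^k$), and the fourth again uses $(I^k)_V = (I_V)^k$. Since $\overline{(I_V)^k} = (I_V)^k$ for all $k \ge 1$, the ideal $I_V$ is normal.

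The real content is therefore the identity $(I^k)_V = (I_V)^k$. I would argue this directly from the combinatorics of monomial ideals. Recall that $J_V$ is generated by those minimal generators of $J$ whose support lies in $V$, equivalently $J_V = J \cap \k[x_i : i \in V]$ viewed inside $S$ after extending. For the inclusion $(I_V)^k \subseteq (I^k)_V$: any generator of $(I_V)^k$ is a product of $k$ generators of $I$ each supported in $V$, hence is an element of $I^k$ supported in $V$, so it lies in $(I^k)_V$. For the reverse inclusion, a minimal generator $f$ of $(I^k)_V$ is a minimal generator of $I^k$ with $\supp f \subseteq V$; writing $f = g_1 \cdots g_k$ as a product of minimal generators $g_\ell$ of $I$ (possibly after dividing out extra factors), each $g_\ell$ divides $f$ and hence has support contained in $\supp f \subseteq V$, so each $g_\ell \in I_V$ and $f \in (I_V)^k$.

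The main obstacle I anticipate is making the factorization in the reverse inclusion fully rigorous: a minimal generator of $(I^k)_V$ need not be literally a product of exactly $k$ generators of $I$ supported on $V$ before simplification, and one must be careful that dividing out superfluous variables does not push the support outside $V$. This is handled precisely by the observation that support is monotone under divisibility, so any monomial factor of a monomial supported in $V$ is itself supported in $V$; thus each $g_\ell$ in the factorization of $f$ automatically satisfies $\supp g_\ell \subseteq V$. With this care, the identity is routine, and the normality of $I_V$ follows immediately from the displayed chain of equalities.
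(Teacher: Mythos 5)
Your proposal is correct and follows exactly the paper's route: the paper's proof is the one-line observation that the result follows from Corollary \ref{cor_restriction} together with the identity $(I^k)_V = (I_V)^k$, which is precisely your chain of equalities. Your verification of $(I^k)_V = (I_V)^k$ (via monotonicity of support under divisibility and the fact that minimal generators of $I^k$ are products of $k$ minimal generators of $I$) is a sound filling-in of the detail the paper takes as known.
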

\begin{proof}
    The conclusion follows from Corollary \ref{cor_restriction} and the fact that $(I^k)_V = (I_V)^k$.
\end{proof}

We now prove that the configurations $F_4$ and $ F_{5}$ give rise to non-normal edge-weighted graphs.

\begin{lem}\label{lem_F_4} Assume that $(G,\w)$ is isomorphic to the configuration $F_4$. Then, $I(G,\w)$ is not normal.    
\end{lem}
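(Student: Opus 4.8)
The plan is to split on whether the odd cycle carries any nontrivial weight, using the already-established configuration $F_3$ to dispose of the weighted cases and reserving an explicit witness monomial for the unweighted cycle. Write $G = C \sqcup e_0$, where $C$ is the odd cycle on the vertices $1,\ldots,2k+1$ with edges $\{i,i+1\}$ taken cyclically, and $e_0 = \{p,q\}$ is the distinguished edge with $\w(e_0) = a \ge 2$ and $p,q \notin V(C)$. First I would observe that if $C$ has \emph{any} nontrivial edge $e' = \{i,j\}$, then $e'$ and $e_0$ are disjoint edges of $G$ lying in different components, both nontrivial. Since $p,q$ are not vertices of $C$, the only generators of $I(G,\w)$ supported on $\{i,j,p,q\}$ are $(x_ix_j)^{\w(e')}$ and $(x_px_q)^a$, so the restriction $I(G,\w)_{\{i,j,p,q\}}$ is exactly the ideal of $F_3$ with both exponents $>1$. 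By Lemma~\ref{lem_F_3} this restriction is not integrally closed, hence not normal (normality at $t=1$ is integral closedness), and Lemma~\ref{lem_res_normal} then forces $I(G,\w)$ to be non-normal. This reduces the problem to the case where every edge of $C$ has weight $1$.

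With $C$ unweighted, $I := I(G,\w) = (x_ix_{i+1} \mid 1 \le i \le 2k+1) + ((x_px_q)^a)$, and I propose the witness monomial $f = x_1x_2\cdots x_{2k+1}\,(x_px_q)^{\lceil a/2\rceil}$ together with the power $t = k+1$. To place $f$ in $\overline{I^{k+1}}$ I would apply Lemma~\ref{lem_criterion_integral_k} with $c_i = \tfrac12$ on each of the $2k+1$ cycle generators and $c_0 = \tfrac12$ on $(x_px_q)^a$; the coefficients then sum to $\tfrac{2k+1}{2} + \tfrac12 = k+1$, and since each cycle vertex lies on exactly two cycle edges the resulting lower bound $\sum_j c_j\b_j = \e_1 + \cdots + \e_{2k+1} + \tfrac a2(\e_p+\e_q)$ is dominated by the exponent vector of $f$.

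Finally I would verify $f \notin I^{k+1}$. If $f$ lay in $I^{k+1}$ it would be divisible by a product of $k+1$ generators, so there would exist integers $m_i \ge 0$ with $\sum_i m_i = k+1$ and the exponent vector of $f$ dominating $\sum_i m_i\b_i$. Comparing the $x_p$-coordinates gives $\lceil a/2\rceil \ge m_0 a$, and since $\lceil a/2\rceil < a$ for $a \ge 2$ this forces $m_0 = 0$. Then all $k+1$ chosen generators are cycle edges, so $\sum_i m_i\b_i$ has total degree $2(k+1) = 2k+2$ in the cycle variables; but $f$ has cycle-degree only $2k+1$, which contradicts domination of the exponent vectors. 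Hence $f \in \overline{I^{k+1}} \setminus I^{k+1}$ and $I(G,\w)$ is not normal.

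The only genuinely delicate point I anticipate is the reduction in the first step: recognizing that an arbitrary weight on the cycle is never harmless, because any such nontrivial cycle edge automatically pairs with the fixed edge $e_0$ to produce $F_3$. This is precisely what collapses the apparently weight-dependent statement to the single clean case of an unweighted cycle, where the non-membership argument becomes the elementary degree count above; the remaining computations are routine applications of Lemmas~\ref{lem_criterion_integral_k} and~\ref{lem_res_normal}.
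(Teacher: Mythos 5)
Your proposal is correct and takes essentially the same route as the paper: reduce to a trivially-weighted cycle via Lemma \ref{lem_F_3} together with restriction, then put weight $\tfrac12$ on every generator to exhibit a witness monomial in $\overline{I^{t}}\setminus I^{t}$ for $t$ equal to half the number of generators, with non-membership following from the parity/degree count on the cycle variables. The only cosmetic difference is your exponent $\lceil a/2\rceil$ on the nontrivial edge's vertices where the paper uses $a-1$ (both dominate $a/2$ and are less than $a$), and your explicit divisibility argument simply spells out the paper's terse justification that no edge joins the cycle to the nontrivial edge.
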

\begin{proof}
By Lemma \ref{lem_F_3}, we may assume that all edges of the odd cycle $C$ in $G$ have trivial weights. In other words, we may assume that 
$$I = I(G,\w) = (x_1x_2,\ldots,x_{2k-2}x_{2k-1},x_1x_{2k-1}) + (x_{2k+1}^ax_{2k}^a),$$
where $k,a$ are integers at least $2$. We will prove that $I^{k}$ is not integrally closed. Let 
$$\u_1 = \e_1 + \e_2,\ldots, \u_{2k-2} = \e_{2k-2} + \e_{2k-1}, \u_{2k-1} = \e_{2k-1} + \e_1, \u_{2k} = a\e_{2k} + a \e_{2k+1}$$
be the vectors in the Newton polyhedron of $I$ corresponding to minimal generators of $I$. We have 
$$\v = \frac{1}{2}(\u_1 + \cdots + \u_{2k}) = (1,\ldots,1,\frac{a}{2},\frac{a}{2}).$$
Since $a \ge 2$, by Lemma \ref{lem_criterion_integral_k}, we deduce that $f = x_1 \cdots x_{2k-1} x_{2k}^{a-1}x_{2k+1}^{a-1} \in \overline{I^k}.$ Since there is no edge between $j$ and $2k$ or $2k+1$ for any $j \in \{1,\ldots,2k-1\}$, we deduce that $f\notin I^k$. The conclusion follows.
\end{proof}

\begin{lem}\label{lem_F_5}  Assume that $(G,\w)$ is isomorphic to the configuration $F_{5}$. Then $I(G,\w)$ is not normal.    
\end{lem}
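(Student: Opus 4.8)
The plan is to mimic the structure of the proof of Lemma \ref{lem_F_4}, but now with two odd cycles instead of one odd cycle and a single nontrivial edge. First I would normalize the configuration $F_5$ using Lemma \ref{lem_F_3}: as in the previous lemma, whenever a cycle has a nontrivial edge we can reduce to the case where all edges inside each odd cycle have trivial weight, since otherwise a nontrivial edge in one cycle together with a nontrivial edge in the other cycle would give an induced $F_3$ (two disjoint nontrivial edges). So I may assume the two odd cycles $C$ and $C'$ have only trivial edges, and the only nontrivial weights appear on the edges (if any) connecting the two cycles. In fact the cleanest reduction is to assume there is exactly one nontrivial connecting edge, say of weight $a \ge 2$, joining a vertex of $C$ to a vertex of $C'$; the dotted line in the figure indicates these connecting edges may be present, and a single such edge suffices to build the counterexample.

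Next I would set up coordinates. Writing the first odd cycle on vertices $1,\ldots,2k-1$ and the second on vertices $2k,\ldots,2k+2\ell-2$, I list the unit-sum vectors $\u_i = \e_i + \e_{i+1}$ (cyclically) coming from the trivial edges of each cycle, together with the vector $a\e_p + a\e_q$ coming from the nontrivial connecting edge $\{p,q\}$ with $p \in C$, $q \in C'$. The key observation, exactly parallel to Lemma \ref{lem_F_4}, is that summing the cycle-edge vectors of an odd cycle of length $2m-1$ and halving yields the all-ones vector on that cycle's vertices, because each vertex is covered by exactly two edges of the cycle. So by taking $\v = \frac{1}{2}\bigl(\sum_{\text{edges of } C} \u_i + \sum_{\text{edges of } C'} \u_j\bigr)$ I land on the all-ones vector supported on all $2k-1 + 2\ell-1$ cycle vertices, at the cost of total coefficient $\frac{1}{2}\bigl((2k-1)+(2\ell-1)\bigr) = k + \ell - 1$.

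The point of including the connecting edge is to push the exponent into the integral closure of the right power without putting it into the power itself. I would choose the candidate monomial $f$ to be the product of all $x_i$ over the cycle vertices, and then use the nontrivial connecting edge to absorb the fractional deficiency, exactly as the vector $a\e_{2k}+a\e_{2k+1}$ was used in Lemma \ref{lem_F_4}. Concretely, I expect to form $\v' = \frac{1}{2}\bigl(\sum \u_i + \sum \u_j + (a\e_p + a\e_q)\bigr)$, whose total coefficient is $k+\ell$, and read off from Lemma \ref{lem_criterion_integral_k} that the rounded-down monomial $f$ (all cycle vertices to the first power, with the contributions at $p$ and $q$ truncated) lies in $\overline{I^{k+\ell}}$ precisely because $a \ge 2$ gives the needed slack at the two connecting vertices. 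Then I must check $f \notin I^{k+\ell}$: this is the membership-in-the-power step, and it is where I would argue that no product of $k+\ell$ generators can divide $f$, using that $f$ has each cycle variable to exponent $1$ and the connecting vertices to an exponent just short of what a generator demands, so any attempt to cover the support forces more than $k+\ell$ generators or exceeds an available exponent.

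The main obstacle I anticipate is precisely this last non-membership verification, which is genuinely combinatorial rather than formal. In Lemma \ref{lem_F_4} the argument was easy because the nontrivial edge was disjoint from the cycle, so its variables had no neighbors among the cycle variables; here the connecting edge shares endpoints with the cycles, so I must track exponents carefully at $p$ and $q$ and rule out that some clever product of generators (mixing cycle edges and the connecting edge) divides $f$. I expect the right bookkeeping is a degree or support count: the total degree of $f$ is fixed, each generator contributes degree at least $2$, and matching the odd-cycle structure to a cover by edges forces a parity obstruction (an odd cycle has no perfect matching, so covering all its vertices with the first-power constraint needs more edge-vectors than a clean fractional cover would suggest). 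Making this parity/degree count precise, uniformly over the possible connecting-edge configurations allowed by $F_5$, is the step that will require the most care.
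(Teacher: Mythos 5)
Your construction contains a fatal wrong turn precisely at the step you flag as delicate. After the (correct) normalization making both odd cycles trivial, the cycle edges alone already finish the proof: with cycles of sizes $2k-1$ and $2\ell-1$ in your indexing, halving the sum of all cycle-edge vectors gives the all-ones vector on the $2k+2\ell-2$ cycle vertices with coefficient total $k+\ell-1$, so $f=\prod_i x_i\in\overline{I^{k+\ell-1}}$ by Lemma \ref{lem_criterion_integral_k}; and $f\notin I^{k+\ell-1}$ comes for free, because $f$ is squarefree, every connecting edge has weight at least $2$ (so its generator cannot divide $f$), and the trivial generators dividing $f$ form a matching inside the two odd cycles, of size at most $(k-1)+(\ell-1)=k+\ell-2$. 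This is exactly the paper's proof (in its indexing: cycles of sizes $2k-1$ and $2l+1$, and $x_1\cdots x_{2k+2l}\in\overline{I^{k+l}}\setminus I^{k+l}$); the nontriviality of the connecting edges is used \emph{only} in the non-membership step, never in the fractional cover. Your extra move of inserting the connecting edge $a\e_p+a\e_q$ into the cover is not just unnecessary, it destroys the counterexample. First, the arithmetic is off: the halved sum then has coefficient total $k+\ell-\frac{1}{2}$, not $k+\ell$, so the criterion only gives membership in $\overline{I^{k+\ell-1}}$. Second, and fatally: your $\v'$ equals $1+\frac{a}{2}$ at $p,q$ and $1$ elsewhere, and to invoke Lemma \ref{lem_criterion_integral_k} you must round \emph{up} (the condition is $\a\ge\sum c_j\b_j$, so ``truncating'' is not allowed), forcing exponents at least $\lceil 1+a/2\rceil$, e.g.\ $a$, at $p$ and $q$. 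But the resulting monomial $g=x_p^ax_q^a\prod_{i\neq p,q}x_i$ genuinely lies in $I^{k+\ell-1}$: dividing by the connecting generator $(x_px_q)^a$ leaves the squarefree monomial on the two paths obtained by deleting $p$ and $q$ from the cycles, and these paths have perfect matchings of sizes $k-1$ and $\ell-1$, so $g\in I\cdot I^{k+\ell-2}=I^{k+\ell-1}$. Hence no monomial produced by your cover can witness non-normality, and the parity/degree obstruction you hope to find does not exist.

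There is a second, structural gap: your ``cleanest reduction'' to exactly one nontrivial connecting edge is unjustified. The only inheritance tool available is restriction to a vertex subset (Corollary \ref{cor_restriction} and Lemma \ref{lem_res_normal}); deleting some but not all connecting \emph{edges} is not such a restriction, and non-normality of the edge-deleted ideal says nothing about the original. Moreover $F_5$ allows the set of connecting edges to be empty, a case your reduction silently excludes. The paper needs no reduction at all: an arbitrary (possibly empty) collection $W$ of nontrivial connecting edges is handled uniformly, precisely because those generators never enter the cover and, having every exponent at least $2$, can never divide the squarefree monomial $f$.
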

\begin{proof} By Lemmas \ref{lem_F_1} and \ref{lem_F_3}, we may assume that all edges on the two odd cycles have trivial weights. In other words, we may assume that 
\begin{align*}
    I = I(D,\w) &= (x_1x_2,\ldots,x_{2k-2}x_{2k-1},x_1x_{2k-1}) \\
    &+ (x_{2k}x_{2k+1}, \ldots,x_{2k+2l-1}x_{2k+2l},x_{2k}x_{2k+2l}) + ((x_ix_j)^{w_{ij}} \mid \{i,j\} \in W),
\end{align*}  
where $k \ge 2$, $l$ are positive integers and $W \subseteq E(G)$ is a possibly empty collection of edges $\{i,j\}$ with $i \in \{1,\ldots,2k-1\}$ and $j\in \{2k,\ldots,2k+2l\}$, and $w_{ij} > 1$ for all $\{i,j\}\in W$. Let 
\begin{align*}
    \u_1 &= \e_1 + \e_2,\ldots, \u_{2k-1} = \e_{2k-1} + \e_{1}, \\
    \u_{2k} &= \e_{2k} + \e_{2k+1}, \ldots, \u_{2k+2l} = \e_{2k} + \e_{2k+2l}
\end{align*}
be the vectors in the Newton polyhedron of $I$ corresponding to minimal generators of $I$. We have 
$$\v = \frac{1}{2}(\u_1 + \cdots + \u_{2k+2l}) = (1,\ldots,1,1,\ldots,1).$$
By Lemma \ref{lem_criterion_integral_k}, we deduce that $f = x_1 \cdots x_{2k+2l} \in \overline{I^{k+l}}.$ Since every possible edge connecting the two odd cycles has nontrivial weight, we deduce that $f \notin I^{k+l}$. The conclusion follows.
\end{proof}

The following three lemmas are analogous to the corresponding lemmas in \cite{MVZ} and allow us to achieve the first two reduction steps outlined in the Introduction.

\begin{lem}\label{lem_drop_bipartite_component} Assume that $G = G_1 \cup G_2$ is the disjoint union of two subgraphs $G_1$ and $G_2$ where $G_2$ is bipartite. Let $\w$ be a weight function on $E(G)$ such that $\w(e) = 1$ for all $e \in E(G_2)$. Denote by $\w_1$ the induced weight function of $\w$ on $E(G_1)$. Then $I(G,\w)$ is normal if and only if $I(G_1,\w_1)$ is normal.    
\end{lem}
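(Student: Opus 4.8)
The plan is to prove both directions by relating integral closures of powers of $I(G,\w)$ to those of $I(G_1,\w_1)$, using the disjoint (variable-separated) structure of the two components. Since $G = G_1 \cup G_2$ is a disjoint union, the variable sets underlying $G_1$ and $G_2$ are disjoint, so $I(G,\w) = I(G_1,\w_1) + I(G_2,\w_2)$ where these two ideals live in disjoint sets of variables. The forward direction is the easy one: if $I(G,\w)$ is normal, then since $G_1$ is an induced subgraph on the vertex set $V(G_1)$, Lemma \ref{lem_res_normal} applied with $V = V(G_1)$ gives that $I(G,\w)_{V(G_1)} = I(G_1,\w_1)$ is normal. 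So the forward implication follows immediately from restriction.

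For the converse, I would assume $I(G_1,\w_1)$ is normal and deduce normality of $I = I(G,\w)$. The key point is that $G_2$ is bipartite with all trivial weights, so $I(G_2,\w_2) = I(G_2)$ is the ordinary edge ideal of a bipartite graph, which is \emph{normal} (bipartite graphs satisfy the odd cycle condition trivially, having no odd cycles). Thus both summands are normal, and they are supported on disjoint variable sets. The plan is to show that a sum of two normal monomial ideals in disjoint variables is normal. Concretely, let $x^\a$ be a minimal generator of $\overline{I^k}$; write $\a = \a_1 + \a_2$ according to the variable split. By Lemma \ref{lem_criterion_integral_k}, there exist nonnegative reals $c_j$ and generator-exponents $\b_j$ of $I$ with $\sum c_j = k$ and $\a \ge \sum c_j \b_j$. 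Because each $\b_j$ is supported entirely in the $G_1$-variables or entirely in the $G_2$-variables, I can split the sum $\sum c_j \b_j = \boldsymbol{\gamma}_1 + \boldsymbol{\gamma}_2$ into its $G_1$- and $G_2$-parts, with $\a_1 \ge \boldsymbol{\gamma}_1$ and $\a_2 \ge \boldsymbol{\gamma}_2$. Setting $k_1 = \sum_{j:\,\b_j \in G_1} c_j$ and $k_2 = k - k_1$, Lemma \ref{lem_criterion_integral_k} gives $x^{\a_1} \in \overline{I(G_1,\w_1)^{\lceil k_1 \rceil}}$ and similarly for $G_2$, but I must be careful because $k_1, k_2$ need not be integers.

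The main obstacle is exactly this integrality issue: the fractional weights $c_j$ need not split into integer totals on the two sides, so the naive argument places $x^{\a_1}$ into $\overline{I(G_1,\w_1)^{k_1}}$ for a real exponent $k_1$. To handle this cleanly, I would instead work at the level of Newton polyhedra directly, using Lemma \ref{lem_newton_polyhedra}: since the variables split, $\NP(I) = \NP(I(G_1,\w_1)) \oplus \NP(I(G_2,\w_2))$ as a (Minkowski-type) product in the split coordinates, and likewise $\NP(I^k)$ decomposes so that $\frac{1}{k}\v \in \NP(I)$ forces the two projections of $\v$ to lie in scaled copies of the respective Newton polyhedra with scaling factors summing to $k$. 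The rounding can then be absorbed by exploiting monotonicity: since each summand ideal contains powers of the maximal ideal on its variable set in high enough degree and is normal, I can round $k_1$ up to $\lceil k_1 \rceil$ on the $G_1$-side and compensate on the $G_2$-side, using that $\overline{I(G_2)^{\lfloor k_2\rfloor}} = I(G_2)^{\lfloor k_2 \rfloor}$ already absorbs any fractional part because $G_2$ is bipartite and its integral closures of powers coincide with the powers. I expect the bookkeeping of this rounding — verifying that the integer totals $\lceil k_1\rceil + \lfloor k_2 \rfloor = k$ and that the split exponent vectors still dominate — to be the technical heart of the argument, but conceptually it reduces to the clean fact that a sum of normal monomial ideals in disjoint variables is normal.
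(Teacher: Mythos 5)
Your forward direction is correct and is exactly what the paper does: restriction to $V(G_1)$ via Lemma \ref{lem_res_normal}. The converse, however, has a genuine gap: the principle you ultimately reduce everything to --- ``a sum of two normal monomial ideals in disjoint sets of variables is normal'' --- is false, and the paper itself supplies the counterexample. Take $I_1 = (x_1^a x_2^a)$ and $I_2 = (x_3^b x_4^b)$ with $a,b > 1$. Each is a principal monomial ideal, hence normal, and they live in disjoint variables; yet by Lemma \ref{lem_F_3} their sum is not even integrally closed, since $x_1^{a-1}x_2^{a-1}x_3^{b-1}x_4^{b-1} \in \overline{I_1 + I_2} \setminus (I_1 + I_2)$. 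This is precisely the configuration $F_3$, and it is the reason the lemma requires $G_2$ to be bipartite with \emph{all weights equal to $1$}, not merely that $I(G_2,\w_2)$ be normal. So the bipartite/trivial-weight hypothesis must enter the argument in a stronger way than through normality of $I(G_2)$ alone.

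The same example shows where your rounding plan breaks. With $k_1 + k_2 = k$ the real total weights on the two sides, the fractional data only yields $x^{\a_1} \in \overline{I_1^{\lfloor k_1 \rfloor}}$ and $x^{\a_2} \in \overline{I_2^{\lfloor k_2 \rfloor}}$, and $\lfloor k_1\rfloor + \lfloor k_2\rfloor$ can be $k-1$. You propose to round \emph{up} on the $G_1$-side, but that step cannot be justified: in the example with $k=1$, $k_1 = k_2 = \tfrac12$ and $\a_1 = (a-1,a-1)$, one has $x^{\a_1} \notin \overline{I_1^{\lceil k_1\rceil}} = \overline{I_1}$. The rounding that works goes the other way and uses bipartiteness essentially: since $G_2$ is bipartite with trivial weights, its incidence matrix is totally unimodular, so the linear program maximizing $\sum_e y_e$ over $\{y \ge 0,\ \sum_e y_e \b_e \le \a_2\}$ attains its maximum at an integral vertex; feasibility of the fractional weights of total $k_2$ then forces integers $m_e \ge 0$ with $\sum_e m_e \ge \lceil k_2 \rceil$ and $\sum_e m_e \b_e \le \a_2$, i.e. $x^{\a_2} \in I(G_2)^{\lceil k_2 \rceil}$. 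Combined with $x^{\a_1} \in \overline{I_1^{\lfloor k_1\rfloor}} = I_1^{\lfloor k_1\rfloor}$ and $\lfloor k_1\rfloor + \lceil k_2\rceil = k$, this closes the proof. That rounding-up property is strictly stronger than normality (the normal ideal $(x_3^bx_4^b)$ fails it), and it is exactly the content the paper imports from \cite[Theorem 2.1]{MT} and \cite[Corollary 2.8]{BH}; without some form of it, your argument does not go through. (A minor additional inaccuracy: edge ideals never contain a power of the maximal ideal on their variable set, so that clause of your plan also fails, but it is inessential next to the main gap.)
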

\begin{proof} The conclusion follows from Lemma \ref{lem_res_normal} and \cite[Theorem 2.1]{MT}. See also \cite[Corollary 2.8]{BH}.
\end{proof}

The following result is a consequence of \cite[Theorem 2.1]{ANKRQ}. We give an alternative proof using our approach.
\begin{lem}\label{lem_drop_leaf} Assume that $v$ is a leaf of $G$ and $\w(\{u,v\}) = 1$, where $u$ is the unique neighbor of $v$. Let $G' = G\backslash v$ be the induced subgraph of $G$ on $V(G) \backslash v$ and $\w'$ be the induced weight function of $\w$ on $E(G')$. Then $I(G,\w)$ is normal if and only if $I(G',\w')$ is normal.     
\end{lem}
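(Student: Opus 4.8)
The plan is to run everything through the Newton polyhedron criterion of Lemma~\ref{lem_criterion_integral_k}, exploiting that the leaf variable $x_v$ divides exactly one minimal generator of $I = I(G,\w)$, namely $x_ux_v$. The forward implication is immediate: if $I$ is normal then by Lemma~\ref{lem_res_normal} its restriction $I_{V'}$ to $V' := V(G')$ is normal, and since the only minimal generator of $I$ whose support meets $v$ is $x_ux_v$ (with support $\{u,v\} \not\subseteq V'$, as $v$ is a leaf), every other generator is one of $I'=I(G',\w')$; hence $I_{V'} = I'$ and $I'$ is normal.

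For the converse I would assume $I'$ normal and prove $\overline{I^t} = I^t$ for all $t$ by induction on $t$. Let $f = x^\a$ be a minimal generator of $\overline{I^t}$ and let $m$ be the $x_v$-exponent of $f$. If $m = 0$ then $\supp f \subseteq V'$, and Corollary~\ref{cor_restriction} together with $(I^t)_{V'} = (I_{V'})^t = (I')^t$ gives $f \in (\overline{I^t})_{V'} = \overline{(I')^t} = (I')^t \subseteq I^t$. If $m \ge 1$, I would invoke Lemma~\ref{lem_criterion_integral_k} and choose a representation $\a \ge \sum_j c_j\b_j + c_0(\e_u+\e_v)$ with $\sum_j c_j + c_0 = t$ \emph{maximizing} $c_0$, where the $\b_j$ are the generators of $I'$. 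When $c_0 \ge 1$ one has $a_u, a_v \ge c_0 \ge 1$, so $x_ux_v \mid f$ and $\a - \e_u - \e_v$ still admits a representation of total weight $t-1$; thus $x^{\a - \e_u - \e_v} \in \overline{I^{t-1}} = I^{t-1}$ by induction and $f = (x_ux_v)\,x^{\a - \e_u - \e_v} \in I^t$.

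The crux is the case $0 < c_0 < 1$. Minimality forces $c_0 > 0$ (taking $c_0=0$ would certify $x^{\a-\e_v}\in\overline{I^t}$), and it also forces $m=1$: if $m \ge 2$ the same representation certifies $x^{\a-\e_v}\in \overline{I^t}$ since its $v$-coordinate is $m-1 \ge 1 > c_0$, contradicting minimality. Moreover $a_u \ge c_0 > 0$, so $a_u \ge 1$. Writing $\b = \a|_{V'}$, it then suffices to show $x^{\b - \e_u} \in (I')^{t-1}$, for then $f = (x_ux_v)\,x^{\b-\e_u} \in I^t$; the case $t=1$ is trivial, and for $t \ge 2$ normality of $I'$ reduces this to the polyhedral claim $\tfrac{1}{t-1}(\b - \e_u) \in \NP(I')$.

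Here lies the main obstacle, which I expect to resolve as follows. Set $r(s) = \tfrac{1}{t-s}(\b - s\e_u)$; restricting the chosen representation to $V'$ gives $r(c_0) \in \NP(I')$, and I want $r(1) \in \NP(I')$. Since $r'(s)$ is a fixed positive multiple of $\b - t\e_u$, the map $s \mapsto r(s)$ traces a straight line, so $\{s : r(s) \in \NP(I')\}$ is an interval; if $r(1)$ failed to lie in $\NP(I')$, the line would exit through some facet $\{\langle x,\ell\rangle = \gamma\}$ (with $\ell \ge 0$ and $\gamma = \min_j\langle \b_j,\ell\rangle$) at a parameter $\beta \in [c_0,1)$. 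The identity $\langle \b - t\e_u,\ell\rangle = (t-\beta)(\gamma - \ell_u)$ shows a genuine exit forces $\gamma < \ell_u$; but every generator $\b_j$ incident to $u$ satisfies $\langle \b_j,\ell\rangle \ge \ell_u > \gamma$, so only generators avoiding $u$ lie on that facet, while $\ell_u > 0$ forces every recession direction of the facet to have vanishing $u$-coordinate. Hence the whole facet lies in $\{x_u = 0\}$, contradicting $r(\beta)_u = \tfrac{a_u-\beta}{t-\beta} > 0$ (recall $a_u \ge 1 > \beta$). This contradiction gives $r(1) \in \NP(I')$ and closes the induction. The essential point making the obstacle tractable is that $x_v$ touches only $\{u,v\}$: this bounds every generator incident to $u$ below by $\ell_u$ on any facet and thereby rules out a fractional exit of the line before $s = 1$.
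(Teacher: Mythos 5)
Your proof is correct, and it shares the paper's overall skeleton: both run an induction on $t$, take a certificate from Lemma~\ref{lem_criterion_integral_k}, isolate the coefficient $c_0$ on the leaf generator $x_ux_v$, force the $x_v$-exponent of $f$ to be $1$ when $0<c_0<1$, and then peel off one copy of $x_ux_v$, reducing everything to membership of the leftover exponent in $(I')^{t-1}$. Where you genuinely diverge is in how that membership is established. The paper does it by an explicit redistribution of certificate weights: it splits $\b=\b'+\b''$, where $\b''$ absorbs the leaf generator together with just enough mass (total $1-\beta_s$) from the generators incident to $u$, so that $\b'$ becomes a weight-$(t-1)$ certificate dominated by $\a-\e_u-\e_v$; this needs a two-case analysis according to whether the $u$-incident generators carry mass at least $1-\beta_s$, and it quotes the proof of \cite[Lemma 2.9]{MV} to get $a_v=1$. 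You instead prove the polyhedral statement head-on: the target point $\tfrac{1}{t-1}(\b-\e_u)$ lies on the line $s\mapsto\tfrac{1}{t-s}(\b-s\e_u)$ issuing from the known point $r(c_0)\in\NP(I')$, and your facet analysis (the identity $\langle\b-t\e_u,\ell\rangle=(t-\beta)(\gamma-\ell_u)$, the bound $\langle\b_j,\ell\rangle\ge\ell_u$ for $u$-incident generators, and the decomposition of a face as the convex hull of generator exponents plus recession directions) shows the line cannot exit $\NP(I')$ before $s=1$, precisely because $x_v$ meets only the edge $\{u,v\}$. Your route is self-contained where the paper is not (your minimality argument replaces the citation of \cite{MV} for $a_v=1$), but it leans on standard polyhedral facts — nonnegativity of facet normals, $\gamma=\min_j\langle\b_j,\ell\rangle$, pointedness and the polytope-plus-cone structure of faces — that the paper's purely coefficient-level bookkeeping never needs. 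Both arguments handle the easy cases ($m=0$ via Corollary~\ref{cor_restriction}, $c_0\ge 1$ via the induction on $t$) identically.
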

\begin{proof}
    By Lemma \ref{lem_res_normal}, it suffices to prove that if $I(G',\w')$ is normal then $I(G,\w)$ is normal. For simplicity of notation, we denote by $J = I(G',\w')$ and $K = I(G,\w)$. We prove by induction on $t$ that $K^t = \overline{K^t}$. The base case $t = 1$ follows from Theorem \ref{thm_intclosed}. Let $f =x^\a \in \overline{K^t}$ be a minimal generator. By induction on the number of vertices and Corollary \ref{cor_restriction}, we may assume that $\supp (f) = [n]$. By Lemma \ref{lem_criterion_integral_k}, there exist nonnegative real numbers $\beta_j$ for $j = 1, \ldots, s$ such that $\sum_1^s \beta_j = t$ and  
    $$\a \ge \b = \sum_{j=1}^s \beta_j \b_j,$$
    where $\b_j=(b_{j,1},\ldots,b_{j,n})$ are exponents corresponding to some minimal generators of $K$. We may assume that $v =n$, $u = n-1$ and $\b_s = \e_{n-1} + \e_n$ be the exponent corresponding to the generator $x_{n-1}x_n$ of $K$. By induction on $t$, we may assume that $\beta_j < 1$ for all $j = 1,\ldots,s$. If $\beta_s = 0$ then $f\in \overline{J^t}$. By assumption, we deduce that $f\in J^t \subseteq K^t$. Hence, we may assume that $\beta_s > 0$. By the proof of \cite[Lemma 2.9]{MV}, we deduce that $a_n = 1$. We can further assume that $\b_{u},\ldots,\b_{s-1}$ are all the other exponents such that $b_{j,n-1} > 0$. There are two cases.

\medskip

\noindent \textbf{Case 1.} $\sum_{j=u}^{s-1} \beta_j < 1 - \beta_s$. In particular, $\sum_{j=1}^{u-1} \beta_j > t-1$. Let $\a' = \a - \b_s$. Since $b_{j,n-1} = b_{j,n} = 0$ for all $j = 1, \ldots,u-1$, we have $\a' \ge \sum_{j=1}^{u-1} \beta_j \b_j$. By Lemma \ref{lem_criterion_integral_k}, we deduce that $x^{\a'} \in \overline{J^{t-1}} = J^{t-1}$. Hence, $x^\a = x^{\a'} (x_{n-1}x_n) \in K^t$.

\medskip

\noindent \textbf{Case 2.} $\sum_{j=u}^{s-1} \beta_j > 1 - \beta_s$. Then there exist nonnegative real numbers $\gamma_j$ for $j = u, \ldots,s-1$ such that $\sum_{j=u}^{s-1} \gamma_j = 1 - \beta_s$. Let $$\b' = \sum_{j=1}^{u-1} \beta_j \b_j + \sum_{j=u}^{s-1} (\beta_j - \gamma_j) \beta_j,\; \b^{''} = \sum_{j=u}^{s-1} \gamma_j \beta_j + \beta_s \b_s.$$ 
Then we have $\b = \b' + \b^{''}$ where $\b' = (b'_1,\ldots,b'_n)$ with $b'_{n-1} \le b_{n-1} - 1$ and $b'_{n} = 0$. Let $\a' = \a - \b_s$. Then, we have $\a' \ge \b'$. By Lemma \ref{lem_criterion_integral_k}, we deduce that $x^{\a'} \in \overline{J^{t-1}} = J^{t-1}$. Hence, $x^{\a} = x^{\a'} (x_{n-1}x_n) \in K^t$. The conclusion follows.
\end{proof}

In the sequel, when $(G,\w)$ has an induced subgraph isomorphic to $F_i$ for some $i = 1, \ldots, 5$ we say that $(G,\w)$ has an $F_i$-minor.

\begin{lem}\label{lem_drop_non_trivial_edge} Let $(G,\w)$ be an edge-weighted graph. Assume that $(G,\w)$ has no $F_i$-minor for all $i = 1, \ldots,5$ and $e\in E(G)$ is an edge of $G$ with $\w(e) > 1$. Let $G' = G\setminus e$, the subgraph of $G$ obtained by removing the edge $e$, and $\w'$ be the induced weight function on $E(G')$. Then $(G',\w')$ has no $F_i$-minor for all $i = 1, \ldots,5$.
\end{lem}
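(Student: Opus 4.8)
The plan is to argue by contradiction and reduce everything to a short case analysis on the five patterns. Suppose $(G',\w')$ has an $F_i$-minor for some $i$, witnessed by a vertex set $U \subseteq V(G)$, so that the induced subgraph $G'[U]$ is isomorphic (as an edge-weighted graph) to $F_i$. Write $e = \{u,v\}$, so that $\w(e) > 1$ and, since $G$ is simple and $G' = G \setminus e$, the vertices $u,v$ are non-adjacent in $G'$. The first step is a clean dichotomy according to whether $U$ contains both endpoints of $e$. If at most one of $u,v$ lies in $U$, then deleting $e$ changes no edge within $U$, so $G[U] = G'[U]$; as $\w$ and $\w'$ agree off $e$, the pattern $F_i$ is already present in $(G,\w)$, contradicting the hypothesis. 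Hence I may assume $u,v \in U$, in which case $G[U]$ is exactly the $F_i$-pattern together with the one extra nontrivial edge $e$ joining two of its vertices that are non-adjacent in that pattern. The goal is then to locate inside $G[U]$ an induced copy of some $F_j$, contradicting that $(G,\w)$ has no $F_j$-minor.

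I would dispose of the three local patterns directly. For $F_1$ (a two-edge path with both edges nontrivial) the only non-adjacent pair is the pair of endpoints, so adding the nontrivial $e$ yields a triangle all of whose edges are nontrivial, i.e.\ an $F_2$. For $F_2$ the three vertices are pairwise adjacent, so no admissible $e$ exists and this case is vacuous. For $F_3$ (two disjoint nontrivial edges) the edge $e$ must join an endpoint of one edge to an endpoint of the other, producing a three-edge path with all edges nontrivial; any three consecutive vertices span an induced $F_1$.

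Next I would treat $F_4$, the disjoint union of an induced odd cycle $C$ with a nontrivial edge $\{s,t\}$. If both $u,v$ lie on $C$, then $e$ is a nontrivial chord which, together with $\{s,t\}$, forms two vertex-disjoint nontrivial edges with no edge between them, an induced $F_3$. If one endpoint of $e$ lies on $C$ and the other is, say, $s$, then $u$–$s$–$t$ is a path whose two edges ($e$ and $\{s,t\}$) are nontrivial and whose ends are non-adjacent, an induced $F_1$. The remaining possibility, both endpoints on $\{s,t\}$, cannot occur since $s,t$ are already adjacent.

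The real work is the $F_5$ case: two vertex-disjoint induced odd cycles $C_1,C_2$ whose only mutual edges form a (possibly empty) set $W$ of nontrivial edges. If $e$ joins a vertex of $C_1$ to a vertex of $C_2$, then $G[U]$ is again two odd cycles joined by the nontrivial set $W \cup \{e\}$, hence an $F_5$, a contradiction. The delicate subcase is when $e$ is a chord of one cycle, say $C_1$. Here I would use the standard parity argument: since $C_1$ is a chordless odd cycle and $u,v$ are non-adjacent on it, $e$ splits $C_1$ into two arcs whose numbers of edges sum to an odd number, so one arc has an even number of edges; that arc together with $e$ is a chordless odd cycle $C_1'$ of length at least three, one of whose edges is the nontrivial $e$. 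Because $C_1$ was induced, the induced subgraph on $V(C_1')$ is exactly $C_1'$; and because the only edges between $V(C_1)$ and $V(C_2)$ lie in $W$, the edges between $V(C_1')$ and $V(C_2)$ form a subset of $W$ and are nontrivial. Thus the induced subgraph on $V(C_1') \cup V(C_2)$ is an $F_5$, contradicting the hypothesis (and symmetrically if the chord lies on $C_2$). I expect this chord subcase to be the only genuine obstacle, since it requires simultaneously the odd-length shortening and the verification that the shortened cycle stays chordless; all other cases follow immediately once the dichotomy above is in place.
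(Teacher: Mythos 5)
Your proof is correct and follows essentially the same approach as the paper, whose proof is just the same case analysis stated as five unproved implications (``it is easy to see''); your write-up supplies the details the paper omits, namely the dichotomy on whether both endpoints of $e$ lie in the pattern and the parity argument showing that a nontrivial chord of an induced odd cycle creates a shorter induced odd cycle. The only (immaterial) divergence is that in the $F_4$-with-chord subcase you exhibit an $F_3$-minor where the paper asserts an $F_1$- or $F_4$-minor; both yield the desired contradiction.
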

\begin{proof} Assume by contradiction that $(G',\w')$ has an $F_i$-minor for some $i = 1, \ldots, 5$. It is easy to see the following.

\medskip

If $(G',\w')$ has an $F_1$-minor, then $(G,\w)$ has an $F_1$-minor or an $F_2$-minor.

\medskip

If $(G',\w')$ has an $F_2$-minor, then $(G,\w)$ has an $F_2$-minor.

\medskip

If $(G',\w')$ has an $F_3$-minor, then $(G,\w)$ has an $F_1$-minor or an $F_3$-minor.

\medskip

If $(G',\w')$ has an $F_4$-minor, then $(G,\w)$ has an $F_1$-minor or an $F_4$-minor.

\medskip

If $(G',\w')$ has an $F_5$-minor, then $(G,\w)$ has an $F_5$-minor. 

In all cases, $(G,\w)$ has an $F_i$-minor for some $i = 1, \ldots,5$, a contradiction. The conclusion follows.
\end{proof}

\begin{lem}\label{lem_drop_trivial_even_cycle} Let $G$ be an even cycle. We denote by $\b_1,\ldots, \b_{2k}$ the exponent vectors corresponding to the minimal generators of $I(G)$. Let $\beta_1,\ldots,\beta_{2k}$ be positive real numbers. Then there exist nonnegative real numbers $\gamma_1,\ldots,\gamma_{2k}$ such that $\sum_{j=1}^{2k} \gamma_j = \sum_{j=1}^{2k} \beta_j$, at least one of $\gamma_j$ is zero and $\sum_{j=1}^{2k} \gamma_j \b_j = \sum_{j=1}^{2k} \beta_j \b_j$.   
\end{lem}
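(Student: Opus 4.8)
The plan is to convert the two vector conditions into scalar equations and then exploit the alternating structure forced by the even length of the cycle. First I would fix a cyclic labeling of the vertices as $1, \ldots, 2k$, so that the minimal generators of $I(G)$ correspond to the exponent vectors $\b_j = \e_j + \e_{j+1}$ for $j = 1, \ldots, 2k$, with the convention $\e_{2k+1} = \e_1$. Since each vertex $i$ lies on exactly the two edges indexed $i-1$ and $i$, the $i$-th coordinate of any combination $\sum_{j} \gamma_j \b_j$ equals $\gamma_{i-1} + \gamma_i$, with indices read modulo $2k$. Hence the target equation $\sum_j \gamma_j \b_j = \sum_j \beta_j \b_j$ is equivalent to the $2k$ scalar equations $\gamma_{i-1} + \gamma_i = \beta_{i-1} + \beta_i$.

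Setting $\delta_j := \gamma_j - \beta_j$ turns these into $\delta_{i-1} + \delta_i = 0$, i.e. $\delta_i = -\delta_{i-1}$ for all $i$. Thus the $\delta_j$ alternate in sign and are governed by a single parameter $t := \delta_1$, with $\delta_j = t$ for odd $j$ and $\delta_j = -t$ for even $j$. The key point, and the step I expect to carry the argument, is that the wrap-around relation $\delta_1 = -\delta_{2k}$ is automatically consistent precisely because $2k$ is even; this is where the hypothesis that $G$ is an \emph{even} cycle enters. (For an odd cycle the same relations would force $t = 0$, leaving no room to produce a vanishing coordinate, so the statement genuinely fails there.)

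It then remains to pick $t$ making all $\gamma_j = \beta_j + \delta_j$ nonnegative with at least one equal to zero. Taking $t = m_e := \min_{\text{even } j} \beta_j$, which is positive since every $\beta_j > 0$, gives $\gamma_j = \beta_j + m_e > 0$ for odd $j$ and $\gamma_j = \beta_j - m_e \ge 0$ for even $j$, with equality at an even index attaining the minimum; so all $\gamma_j \ge 0$ and at least one vanishes. The remaining requirement $\sum_j \gamma_j = \sum_j \beta_j$ is then free: among $1, \ldots, 2k$ there are exactly $k$ odd and $k$ even indices, so $\sum_j \delta_j = k t - k t = 0$. This establishes all three conditions and finishes the proof.
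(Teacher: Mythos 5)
Your proof is correct and uses essentially the same construction as the paper: shift the coefficients by the minimum even-indexed $\beta_j$, adding it to the odd-indexed coefficients and subtracting it from the even-indexed ones, which preserves both the weighted sum (since $\sum_{j \text{ odd}} \b_j = \sum_{j \text{ even}} \b_j$) and the total $\sum_j \gamma_j$. The only difference is presentational: you derive this shift by solving the linear system $\gamma_{i-1}+\gamma_i = \beta_{i-1}+\beta_i$ and characterizing all solutions, whereas the paper simply exhibits the same reassignment directly.
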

\begin{proof} By the assumption, we deduce that 
$$\b_{1} + \b_3 + \cdots + \b_{2k-1} = \b_2 + \cdots + \b_{2k}.$$
We may assume that $\beta_{2k} = \min \{\beta_{2j} \mid j = 1, \ldots, k\}$. We have 
\begin{align*}
    \sum_{j=1}^{2k} \beta_j \b_j &=  \sum_{j=0}^{k-1} \beta_{2j+1} \b_{2j+1} + \sum_{j=1}^k \beta_{2j} \b_{2j} \\
    & = \sum_{j=0}^{k-1} (\beta_{2j+1} + \beta_{2k}) \b_{2j+1} + \sum_{j=1}^{k-1} (\beta_{2j} - \beta_{2k}) \b_{2j}.
\end{align*}
    The conclusion follows.
\end{proof}

\begin{lem}\label{lem_drop_non_trivial_even_cycle} Let $(G,\w)$ be an edge-weighted even cycle with exactly one nontrivial edge. We denote by $\b_1,\ldots, \b_{2k}$ the exponent vectors corresponding to the minimal generators of $I(G)$. Let $\beta_1,\ldots,\beta_{2k}$ be positive real numbers. Then there exist nonnegative real numbers $\gamma_1,\ldots,\gamma_{2k}$ such that $\sum_{j=1}^{2k} \gamma_j = \sum_{j=1}^{2k} \beta_j$, at least one of $\gamma_j$ is zero and $\sum_{j=1}^{2k} \gamma_j \b_j \le \sum_{j=1}^{2k} \beta_j \b_j$.   
\end{lem}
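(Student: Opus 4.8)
The plan is to exploit the single linear relation among the exponent vectors of an even cycle, exactly as in Lemma~\ref{lem_drop_trivial_even_cycle}, while keeping track of the defect introduced by the one nontrivial edge. Here $\b_1,\dots,\b_{2k}$ are the exponent vectors of the minimal generators of $I(G,\w)$: writing the cycle as $1-2-\cdots-2k-1$ with edges $e_j=\{j,j+1\}$ (indices mod $2k$), the $2k-1$ trivial edges give $\b_j=\e_j+\e_{j+1}$ and the unique nontrivial edge $\{u,v\}$ of weight $a>1$ gives $\b_j=a(\e_u+\e_v)$.

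First I would split the $2k$ edges into the two perfect matchings formed by alternate edges, letting $M_1$ be the matching containing the nontrivial edge and $M_2$ the other; each matching covers every vertex exactly once. The key computation is then
\[
\sum_{j\in M_1}\b_j=\mathbf 1+(a-1)(\e_u+\e_v),\qquad \sum_{j\in M_2}\b_j=\mathbf 1,
\]
where $\mathbf 1=\e_1+\cdots+\e_{2k}$, since in each matching every vertex is covered once and the only deviation from weight $1$ is the term $a(\e_u+\e_v)$ sitting in $M_1$. Subtracting yields the single relation $\sum_{j\in M_1}\b_j-\sum_{j\in M_2}\b_j=(a-1)(\e_u+\e_v)\ge \o$. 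This is precisely where the hypothesis of \emph{exactly one} nontrivial edge enters: it forces the defect to be a nonnegative vector supported on a single edge, so that shifting weight around the cycle in the right direction can only decrease the sum.

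Next I would set $t=\min\{\beta_j : e_j\in M_1\}$, which is positive because all $\beta_j>0$, and define $\gamma_j=\beta_j-t$ for $e_j\in M_1$ and $\gamma_j=\beta_j+t$ for $e_j\in M_2$. Every $\gamma_j$ is nonnegative, and the minimizing index of $M_1$ gives $\gamma_j=0$, providing the required vanishing coordinate. Since $|M_1|=|M_2|=k$, the total is preserved, $\sum_j\gamma_j=\sum_j\beta_j$. Finally,
\[
\sum_{j=1}^{2k}\gamma_j\b_j=\sum_{j=1}^{2k}\beta_j\b_j-t\Big(\sum_{j\in M_1}\b_j-\sum_{j\in M_2}\b_j\Big)=\sum_{j=1}^{2k}\beta_j\b_j-t(a-1)(\e_u+\e_v)\le \sum_{j=1}^{2k}\beta_j\b_j,
\]
because $t\ge 0$ and $a>1$, which is exactly the claimed inequality.

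The argument is essentially bookkeeping, and the one genuinely delicate point — the only place the proof departs from the trivial-cycle case — is the sign of the defect. One must decrease the matching that contains the nontrivial edge: with the nontrivial edge in $M_1$ the correction $-t(a-1)(\e_u+\e_v)$ points in the favorable direction, whereas decreasing $M_2$ instead would flip the sign and the inequality could fail. Everything else (nonnegativity of the $\gamma_j$, conservation of the total mass, and the existence of a zero coordinate) follows immediately from the choice of $t$ and the equal cardinalities of the two matchings.
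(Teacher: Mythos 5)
Your proof is correct and follows essentially the same route as the paper: the paper also shifts mass from the matching containing the nontrivial edge (its odd-indexed edges) to the other matching by the minimum coefficient in that first matching, and the inequality comes from the same defect identity, written there as $\beta_{2j+1}\left(\frac{1}{a}-1\right)\b_1 = -\beta_{2j+1}(a-1)(\e_1+\e_2)$. Your observation about the sign — that one must decrease the matching containing the nontrivial edge — is exactly the point implicit in the paper's choice of minimizing over the odd-indexed coefficients.
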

\begin{proof} We may assume that $\b_1 = a(\e_1 + \e_2)$ is the exponent corresponding to the only nontrivial edge of $G$. Let $\beta_{2j+1} = \min\{\beta_{2l+1} \mid l = 0,\ldots, k-1\}$ and

$$\gamma_{i}  = \begin{cases} \beta_i + \beta_{2j+1} & \text{ if } i \text{ is even},\\
\beta_i - \beta_{2j+1} & \text{ if } i \text{ is odd}.\end{cases}$$
Then $\sum_{i=1}^{2k} \gamma_i = \sum_{i=1}^{2k} \beta_i$, at least one of $\gamma_i$ is zero. Furthermore,

$$\sum_{i=1}^{2k} \gamma_i \b_i = \sum_{i=1}^{2k} \beta_i \b_i + \beta_{2j+1} \left ( \frac{1}{a}-1 \right )\b_{1} \le \b. $$
The conclusion follows.
\end{proof}

We now introduce the compact graphs defined by Wang and Lu \cite{WL} and their classification.
\begin{defn}
    A simple graph $G$ is compact if $G$ has no even cycles and satisfies the odd cycle conditions.
\end{defn}

The following classification of compact graphs is \cite[Theorem 2.12]{WL}. For convenience in our applications, we use different terminology from that of Wang and Lu. We call a finite collection of odd cycles sharing a common vertex a bouquet of odd cycles, and the common vertex is called its stem. In particular, an odd cycle itself is considered to be a special bouquet of odd cycles.

\begin{thm}\label{thm_compact_graphs} A connected simple graph $G$ without leaves is compact if and only if it belongs to one of the following classes of graphs.
\begin{enumerate}
    \item A bouquet of odd cycles.
    \item Two bouquets of odd cycles whose stems are joined by an edge and possibly another path of even length.
    \item Three bouquets of odd cycles whose stems form a triangle.
\end{enumerate}
\end{thm}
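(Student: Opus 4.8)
The plan is to convert the two defining properties of a compact graph into rigid constraints on its block decomposition and then read off the three classes directly. Throughout I would use that the odd cycle condition means precisely that $G$ has no induced disjoint union of two odd cycles, i.e.\ that any two vertex-disjoint odd cycles of $G$ are joined by at least one edge of $G$. Since the ``only if'' direction is the substantive one, the argument splits into showing that a compact graph must have the stated shape, followed by a routine verification of the converse.

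First I would show that having no even cycle forces $G$ to be an \emph{odd cactus}: every block of $G$ (a maximal $2$-connected subgraph or a bridge) is either a single edge or an odd cycle. If some block were $2$-connected but not a single cycle, then, taking an open ear decomposition starting from one of its cycles, the first ear produces a subdivision of the theta graph, i.e.\ two vertices joined by three internally disjoint paths of lengths $a,b,c\ge 1$. The three resulting cycles have lengths $a+b,\,b+c,\,c+a$, whose sum $2(a+b+c)$ is even, so at least one of them is even, a contradiction. Hence every nontrivial block is a single odd cycle, the blocks are arranged along the block--cut tree $T$, and every cycle of $G$ is a single (chordless) cycle-block. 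The no-leaves hypothesis then forces every leaf of $T$ to be a cycle-block.

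Next I would exploit the odd cycle condition. The key point is that in a cactus two cycles that are connected only through a path of length $\ge 2$ cannot be joined by any extra edge, since such an edge would create a second cycle sharing edges with an existing one. Consequently, if two cycle-blocks $C,C'$ are vertex-disjoint, the edge required by the odd cycle condition must be either a bridge joining $C$ to $C'$ directly, or an edge of a third cycle-block meeting $C$ and $C'$ in two \emph{adjacent} vertices; in block--cut tree language, any two cycle-blocks lie at distance at most $4$ in $T$. This rules out chains of bridges of length $\ge 2$ and chains of three cycles attached at non-adjacent cut vertices. I would then group the cycle-blocks into bouquets (maximal families sharing a common stem) and count the stems. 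One stem gives a bouquet of odd cycles, which is class (1). Two stems $s_1,s_2$ can be linked only by a single bridge $\{s_1,s_2\}$ or by a single odd cycle through $s_1,s_2$ with $s_1,s_2$ adjacent on it, i.e.\ the edge $\{s_1,s_2\}$ together with its complementary arc of even length; a theta configuration (the edge plus two further paths) would again force an even cycle, so nothing more is allowed, giving class (2). For three stems, the odd cycle condition demands a joining edge for each of the three pairwise vertex-disjoint bouquets, so $\{s_1,s_2\},\{s_2,s_3\},\{s_1,s_3\}$ are all present and form a triangle, giving class (3). With four or more stems the required pairwise joining edges would contain a copy of $K_4$ and hence a $4$-cycle, contradicting the absence of even cycles; thus at most three stems occur.

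For the converse I would simply verify that each of the three classes is compact: in every case all blocks are odd cycles, so there is no even cycle, and the displayed edges (the stem, the stem edge plus even arc, or the triangle) furnish a joining edge for every pair of vertex-disjoint odd cycles. I expect the main obstacle to be the third step: systematically excluding the many a priori ways cycle-blocks might be chained or attached, and in particular distinguishing the single even-length path permitted in class (2) from the genuine triangle forced in class (3). The cactus structure established in the first step is what makes this bookkeeping manageable, since it guarantees that any two cycles meet in at most one vertex and that no additional joining edges can exist.
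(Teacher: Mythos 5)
There is nothing in the paper to compare your argument against: the paper does not prove this statement at all, but imports it verbatim from Wang and Lu \cite[Theorem 2.12]{WL}. So your proposal has to be judged on its own, and its first two steps are correct and complete. The theta-graph parity argument (three internally disjoint paths of lengths $a,b,c$ give cycles of lengths $a+b$, $b+c$, $c+a$, which cannot all be odd) does show that a graph with no even cycle is a cactus whose blocks are bridges or chordless odd cycles, with every leaf block a cycle; and in such a cactus the edge demanded by the odd cycle condition between two vertex-disjoint cycle blocks must indeed be a bridge joining them or an edge of a third cycle block meeting each of them at one endpoint, so any two cycle blocks lie within distance $4$ of each other in the block--cut tree $T$.

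The gap is in the stem-counting step, which is where the substance of the classification lies, and it is twofold. First, ``grouping the cycle-blocks into bouquets'' is not a well-defined partition: the connecting cycle of class (2) (stem edge plus even arc) contains two stems and the triangle of class (3) contains three, so ``stem'' has to be defined before stems can be counted. Second, and more seriously, both your two-stem analysis and your exclusion of four or more stems (pairwise joining edges give a $K_4$, hence a $4$-cycle) presuppose that the edge joining two bouquets joins the stems themselves; that is precisely what needs proof, and the even-cycle/theta considerations you invoke control only how many connections there are, not where they attach. The missing ingredient is uniqueness of paths in $T$: if $C_1,C_1'$ are cycles sharing the stem $s_1$ (distance $2$ in $T$) and $C_2$ is a cycle disjoint from both (distance $4$ from each), then the common meeting point of the three pairwise paths in $T$ is forced to be the cut vertex $s_1$, so the middle block on the path from either cycle to $C_2$ contains $s_1$; applying the same argument from the other side shows the middle block contains both stems, hence is the bridge $\{s_1,s_2\}$ or a chordless odd cycle in which $s_1,s_2$ are adjacent, and the arrival of a third stem then forces that cycle to have three pairwise adjacent vertices, i.e.\ to be a triangle. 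With this (or an equivalent block-tree) argument inserted, your plan does close up, and the converse verification is routine as you say; without it, the key assertions of the third step are statements of the answer rather than a proof of it.
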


\begin{lem}\label{lem_normal_compact} Let $G$ be an edge-weighted connected compact graph and $\w$ be a weight function on the edges of $G$. Assume that $(G,\w)$ has only one nontrivial weight and has no $F_4,F_5$-minor. Then $I(G,\w)$ is normal.
\end{lem}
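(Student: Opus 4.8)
The plan is to prove that $I(G,\w)$ is normal by showing $\overline{I^t} = I^t$ for all $t$, working by induction on $t$ with the base case $t=1$ supplied by Theorem~\ref{thm_intclosed} (since no $F_4,F_5$-minor in particular means no $F_1,F_2,F_3$-minor when $G$ is compact). The central idea is to exploit the explicit structure of compact graphs given by Theorem~\ref{thm_compact_graphs}: $G$ is a bouquet of odd cycles, or two bouquets joined by an edge (and possibly an even path), or three bouquets whose stems form a triangle. Because $(G,\w)$ has exactly one nontrivial edge $e$ with weight $a$, the combinatorial geometry is highly constrained. I would take a minimal generator $f = x^\a \in \overline{I^t}$, and by Corollary~\ref{cor_restriction} together with induction on the number of vertices, reduce to the case $\supp f = [n]$. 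By Lemma~\ref{lem_criterion_integral_k}, write $\a \ge \b = \sum_j \beta_j \b_j$ with $\sum \beta_j = t$ and each $\b_j$ corresponding to a minimal generator; the goal is to produce an \emph{integral} representation, i.e. to rewrite the convex combination so that the coefficients can be taken to be nonnegative integers summing to $t$, which would directly exhibit $f \in I^t$.

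The key mechanism is the two even-cycle exchange lemmas. Any even closed walk in $G$ yields a relation among the $\b_j$ that lets us shift weight between the two color classes of its edges. Lemma~\ref{lem_drop_trivial_even_cycle} handles a genuine even cycle of trivial edges, letting us zero out one coefficient while preserving both the coefficient sum and the weighted sum $\sum \beta_j \b_j$; Lemma~\ref{lem_drop_non_trivial_even_cycle} handles an even cycle carrying the single nontrivial edge, where we can still zero out a coefficient and preserve the sum while only \emph{decreasing} $\sum \beta_j \b_j$ (which is harmless since we only need $\a \ge \b$). In a compact graph the only even closed walks arise from concatenating two odd cycles through a path, or from an even connecting path between two bouquets. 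So I would use these lemmas repeatedly to drive the support of the $\beta_j$ down to a minimal configuration — ideally until the fractional part is concentrated on a single odd cycle or a single edge, at which point the odd-cycle condition and the fact that $\sum \beta_j = t$ is an integer force an integral representation. The odd cycle condition (no two disjoint odd cycles, encoded by the absence of $F_4,F_5$) is exactly what guarantees that fractional weight on odd cycles can always be funneled into a single bouquet rather than being stuck across two disjoint odd components.

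Concretely, I expect the argument to split along the three classes of Theorem~\ref{thm_compact_graphs}. For a single bouquet of odd cycles sharing a stem, the parity structure at the stem lets one combine the odd cycles pairwise through the stem (each pair forming an even closed walk) and reduce to a single odd cycle; the half-integer vector $\v = \tfrac12\sum \u_i$ on one odd cycle is the all-ones vector on its vertices, and since the cycle is odd one checks directly that the integral closure adds nothing beyond $I^t$ when $\sum \beta_j$ is an integer. For the two-bouquet and three-bouquet cases, the joining edge or the stem-triangle provides the additional even closed walks needed to transfer fractional mass between bouquets, again via Lemmas~\ref{lem_drop_trivial_even_cycle} and~\ref{lem_drop_non_trivial_even_cycle}, and the absence of $F_5$ ensures no obstruction from nontrivially-weighted connecting edges. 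Throughout, the presence of at most one nontrivial edge means Lemma~\ref{lem_drop_non_trivial_even_cycle} is invoked at most in the one even walk through $e$, keeping the bookkeeping manageable.

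The hard part will be the reduction step itself: turning the qualitative statement ``use even closed walks to zero out coefficients'' into a terminating procedure that genuinely ends in an integral representation, rather than merely reducing the number of nonzero $\beta_j$. The subtlety is that after zeroing one coefficient one must verify the remaining fractional data still satisfies the hypotheses of the exchange lemmas and that the single nontrivial edge does not get ``trapped'' in a way that blocks further reductions — this is where the explicit compact-graph classification and the $F_4,F_5$-freeness must be used in full, and where a careful case analysis on which bouquet(s) carry fractional weight, and on the location of the nontrivial edge relative to the connecting path or stem-triangle, will be unavoidable.
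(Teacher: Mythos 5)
Your overall strategy---a fractional representation via Lemma \ref{lem_criterion_integral_k}, parity-based exchange arguments, and induction on $t$ and on the size of the graph---is the same one the paper follows in spirit, but the proposal has a genuine gap at its center, and two of the tools you name cannot do the job you assign to them. Lemmas \ref{lem_drop_trivial_even_cycle} and \ref{lem_drop_non_trivial_even_cycle} are statements about even \emph{cycles}, and a compact graph has none; the even closed \emph{walks} you want to use instead (two odd cycles joined through a path or a stem) give different relations: each edge of the connecting path occurs twice in such a walk with the \emph{same} parity, so the alternating relation carries those edges with coefficient $2$, and the single nontrivial edge enters with a factor $1/a$ when you try to keep the coefficient sum at least $t$. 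None of this is covered by the two cited lemmas; it must be computed by hand. That is exactly what the paper does: it first extracts the structural consequence of $F_4$-freeness that the nontrivial edge $e$ must be an edge of an odd cycle, adjacent to one, or joined to one by a single trivial edge (and that otherwise $G$ has a unique cycle), and then in each of these three positions writes down an explicit shift $\gamma_i = \beta_i \pm \delta$ around the odd cycle, using that an alternating sum around an odd cycle based at a vertex $v$ equals $2\e_v$, which can absorb the contribution $\beta_1\b_1 = \beta_1 a(\e_1+\e_2) \ge 2\beta_1\e_1$ of the nontrivial edge. Your plan never isolates this consequence of $F_4$-freeness, and organizing the cases by the three classes of Theorem \ref{thm_compact_graphs} alone will not produce it.

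Moreover, the case you dismiss with ``one checks directly''---a single odd cycle carrying the one nontrivial edge---is the hardest case of the proof, not a routine verification. There the exchange moves do not simply terminate in a zeroed-out coefficient: the paper introduces a rounding device, choosing each shift $\gamma \le \lceil \beta_{2j}+\beta_{2j+1}\rceil - (\beta_{2j}+\beta_{2j+1})$ so that consecutive pairs of coefficients reach integer sums, and then argues that either some $\gamma_i = 0$ (induction on the number of edges), or some pair sums to at least $2$ (induction on $t$), or all pairs sum to exactly $1$, in which case $\gamma_1 \ge t-k$ and one again inducts on $t$. This termination argument is precisely the ``hard part'' you flag at the end of your proposal and leave open; since it is the mathematical core of the lemma (and of the result \cite[Theorem 4.9]{DZCL} it recovers), the proposal as it stands is a correct outline of the strategy but not a proof.
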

\begin{proof}We prove by induction on the number of vertices of $G$. The base case where $|V(G)|= 2$ is clear. By Lemma \ref{lem_drop_leaf}, we may assume that $(G,\w)$ has no leaf edges with trivial weights. Let $e =\{1,2\}$ be the unique edge of $G$ such that $\w(e)=a >1$. Let $f = x^\a$ be a minimal generator of $\overline{I^t}$ for some $t$. By Lemma \ref{lem_criterion_integral_k}, there exist nonnegative real numbers $\beta_1,\ldots,\beta_s$ and exponents $\b_1,\ldots,\b_s$ corresponding to minimal generators of $I(G,\w)$ such that 
$$\sum_{j=1}^s \beta_j = t\text{ and } \a \ge \b = \sum_{j=1}^s \beta_j \b_j.$$
We prove by induction on $t$ that $f \in I^t$. Since $(G,\w)$ has no $F_4$-minor, we deduce that $e$ contains a unique stem of $G$ and one of the following must hold:
\begin{enumerate}
    \item $e$ is adjacent to an odd cycle in $G$.
    \item $e$ is connected to an odd cycle in $G$ by a trivial edge.
    \item $e$ is an edge of an odd cycle in $G$.
\end{enumerate}
Furthermore, if (1) or (2) does not hold then $G$ contains a unique cycle.

\medskip

\noindent \textbf{Case 1.} $e$ is adjacent to an odd cycle in $G$. Let $C = 1,3,\ldots,2k$ be an odd cycle in $G$. We may assume that $\b_1 = a(\e_1 + \e_2)$, $\b_2 = \e_1 + e_3,\b_3 = \e_3 + \e_4, \ldots,\b_{2k} = \e_1 + \e_{2k}$ are the exponents corresponding to the edge $e$ and the odd cycle $C$. If $\beta_1 = 0$ then $f \in \overline{I(G\backslash e)^t} = I(G\backslash e)^t \subseteq I(G,\w)^t$. Hence, we may assume that $\beta_1 > 0$. Let $\beta_{2j+1} = \min\{ \beta_{2l+1} \mid 1 < l \le k-1\}.$ There are two subcases.

Case 1.a. $\beta_1 \le \beta_{2j+1}$. Let 

$$\gamma_{i}  = \begin{cases} \beta_i + \beta_{1} & \text{ if } i \text{ is even},\\
\beta_i - \beta_{1} & \text{ if } i \text{ is odd}.\end{cases}$$
Then we have $\sum_{i=1}^{2k} \gamma_i = \sum_{i=1}^{2k} \beta_i$ and 
$$\sum_{i=1}^s \gamma_i \b_i = \sum_{i=2}^s\beta_i \b_i + \beta_1 2 \e_1 \le \b.$$
Hence, we may reduce to the case $\beta_1 = 0$ and the conclusion follows.

Case 1.b. $\beta_1 > \beta_{2j+1}$. Let
$$\gamma_{i}  = \begin{cases} \beta_i + \beta_{2j+1} & \text{ if } i \text{ is even},\\
\beta_i - \beta_{2j+1} & \text{ if } i \text{ is odd}.\end{cases}$$
Then we have $\sum_{i=1}^s \gamma_i = \sum_{i=1}^s \beta_i$ and 
$$\sum_{i=1}^s \gamma_i \b_i = \sum_{i=2}^s\beta_i \b_i + (\beta_1 - \beta_{2j+1}) \b_1 +  2 \e_1 \beta_{2j+1} \le \b.$$
Since $\gamma_{2j+1} = 0$ and $G\backslash e_{2j+1}$ is compact, by induction on the number of edges and Lemma \ref{lem_criterion_integral_k}, we deduce that 
$$f \in \overline{I(G\backslash e_{2j+1},\w)^t}= I(G\backslash e_{2j+1},\w)^t \subseteq I(G,\w)^t.$$

\medskip

\noindent \textbf{Case 2.} $e = \{1,2\}$ is connected to an odd cycle in $G$ by a trivial edge. Since $(G,\w)$ has no $F_4$-minor, we deduce that the vertices of the connecting edge are the stems of $G$. In other words, we may assume that $\{1,3\} \in E(G)$ and $3$ is a stem of a bouquet of odd cycles in $G$. In particular, we have $C = 3,\ldots,2k+1$ is an odd cycle of $G$. We assume that $\b_1,\ldots,\b_{2k+1}$ are the exponents corresponding to the edges $\{1,2\},\{1,3\}$ and edges of $C$. Let $\beta_{2j+1} = \min\{ \beta_{2l+1} \mid 1 < l \le k-1\}.$ There are two subcases.

Case 2.a. $\beta_1 \le 2\beta_{2j+1}/a$. Let $\gamma=a \beta_1/2$, $\gamma_1=0$ and

$$\gamma_{i}  = \begin{cases} \beta_i + \gamma & \text{ if } i >2\text{ is even},\\
\beta_i - \gamma& \text{ if } i \text{ is odd}\\
\beta_i + 2 \gamma &\text{ if } i = 2.\end{cases}$$
Then we have $\sum_{i=1}^{2k+1} \gamma_i = \sum_{i=1}^{2k+1} \beta_i + \gamma -\beta_1$. Hence,
$$\sum_{i=1}^s \gamma_i \ge \sum_{i=1}^s \beta_i \text{ and } \sum_{i=1}^s \gamma_i \b_i = \sum_{i=1}^s\beta_i \b_i -a \beta_1  \e_2\le \b.$$
Since $\gamma_1 = 0$, by Lemma \ref{lem_criterion_integral_k}, we deduce that $f \in \overline{ I(G\setminus e)^t} = I(G\backslash e)^t \subseteq I(G,\w)^t$.

Case 2.b. $\beta_1 >2 \beta_{2j+1}/a$. Let
$$\gamma_{i}  = \begin{cases} \beta_i + \beta_{2j+1} & \text{ if } i >2\text{ is even},\\
\beta_i - \beta_{2j+1} & \text{ if } i >1\text{ is odd}\\
\beta_i +2\beta_{2j+1} & \text{ if } i =2\\
\beta_1-2\beta_{2j+1}/a & \text{ if } i =1.\end{cases}$$
Then we have 
\begin{align*}
    \sum_{i=1}^s \gamma_i & = \sum_{i=1}^s \beta_i + \beta_{2j+1} \left ( 1-\frac{2}{a} \right ) \ge \sum_{i=1}^s \beta_i \text{ and }\\
    \sum_{i=1}^s \gamma_i \b_i &= \sum_{i=1}^s\beta_i \b_i  -  2 \beta_{2j+1}\e_2 \le \b.
\end{align*}  
Since $\gamma_{2j+1} = 0$ and $G\backslash e_{2j+1}$ is compact, by induction on the number of edges and Lemma \ref{lem_criterion_integral_k}, we deduce that 
$$f \in \overline{I(G\backslash e_{2j+1},\w)^t}= I(G\backslash e_{2j+1},\w)^t \subseteq I(G,\w)^t.$$

\medskip

\noindent \textbf{Case 3.} $e$ is an edge of an odd cycle and $G$ is the odd cycle itself. This case follows from \cite[Theorem 4.9]{DZCL}. We give an alternative proof using our approach. We denote by $\b_1,\ldots,\b_{2k+1}$ be the exponents corresponding to minimal generators of $I(G,\w)$ corresponding to the edges of $C$ by clockwise order, where $\b_1 = a(\e_1 + \e_2)$ corresponds to $e$. In this case, we have $s = 2k+1$. If $\beta_j = 0 $ for some $j$ then we are done by induction on the number of edges. Hence, we may assume that $\beta_j > 0 $ for all $j$. Fix a pair of exponents $\b_{2j},\b_{2j+1}.$ Let $\gamma$ be any positive numbers less than or equal to $\beta_i$ for all $i\ge 2j$. Then we let $\gamma_1 = \beta_1 - \gamma/a$ and for all $i \ge 2j+1$, we set
$$\gamma_{i}  = \begin{cases} \beta_i + \gamma & \text{ if } i \text{ is odd},\\
\beta_i - \gamma & \text{ if } i \text{ is even}.\end{cases}$$
Then we have 
\begin{align*}
\sum_{i=1}^{2k+1} \gamma_i & = \sum_{i=1}^{2k+1} \beta_i + \gamma \left ( 1 - \frac{1}{a} \right ) \ge \sum_{i=1}^{2k+1} \beta_i \text{ and } \\
\sum_{i=1}^{2k+1} \gamma_i \b_i &= \sum_{i=1}^{2k+1}\beta_i \b_i   -  \gamma\e_2 + \gamma \e_{2j+1}.    
\end{align*}
Note that $b_{2j+1} = \beta_{2j} + \beta_{2+1}$. Hence, if we choose $\gamma \le \lceil  \beta_{2j} + \beta_{2j+1} \rceil - (\beta_{2j} + \beta_{2j+1})$ we deduce that we may replace $\beta_i$ by $\gamma_i$ and still get $\a \ge \sum_{i=1}^{2k+1} \gamma_i \beta_i$. Now, we start clockwise and apply these changes to get either $\gamma_{2j} + \gamma_{2j+1} = \lceil  \beta_{2j} + \beta_{2j+1} \rceil$ for all $j = 1, \ldots, k$ or $\gamma_i = 0$ for some $i$. If $\gamma_i=0$ for some $i$ then we are done by induction on the number of edges. If $\gamma_{2j}+\gamma_{2j+1} \ge 2$ then one of them is at least $1$ and we are done by induction on $t$. Hence, we assume that $\gamma_{2j}+\gamma_{2j+1}=1$ for all $j=1, \ldots,k$. Since $\sum_{j=1}^{2k+1} \gamma_j\ge t$, we deduce that $\gamma_1 \ge t-k.$ By induction on $t$ we may then reduce to the case that $\gamma_1=0 $. That completes the proof of Case 3 and the lemma.
\end{proof}

We are now ready for the proof of the main theorem.

\begin{proof}[Proof of Theorem \ref{thm_normality}] We assume that $\w$ is nontrivial. By Lemmas \ref{lem_F_1}, \ref{lem_F_2}, \ref{lem_F_3}, \ref{lem_F_4}, and \ref{lem_F_5}, we may assume that $(G,\w)$ has no $F_i$-minor for all $i = 1,\ldots,5$. We prove by induction on $|V(G)|$ and on $|E(G)|$ that $I = I(G,\w)$ is normal. For ease of reading, we divide the proof into several steps.

\medskip

\noindent \textbf{Step 1.} Reduction to the case where $G$ is connected. Assume that $G = G_1 \cup G_2$ is the disjoint union of two induced subgraphs $G_1$ and $G_2$. Since $G$ has no $F_3$-minor, we deduce that one of the weight functions $\w_1$ and $\w_2$ induced on the edges of $G_1$ and $G_2$ are trivial. We may assume that $\w_1$ is nontrivial and $\w_2$ is trivial. Since $(G,\w)$ has no $F_4$-minor, we deduce that $G_2$ is bipartite. By Lemma \ref{lem_drop_bipartite_component} and induction on $|V(G)|$, we may assume that $G$ is connected.

\medskip

\noindent \textbf{Step 2.} Reduction to the case where $(G,\w)$ has only one nontrivial edge. Let $f = x^\a$ be a minimal generator of $\overline{I^t}$. By Lemma \ref{lem_criterion_integral_k}, there exist nonnegative real numbers $\beta_1,\ldots,\beta_s$ and exponents $\b_1,\ldots,\b_s$ corresponding to minimal generators of $I$ such that $\sum_{j=1}^s \beta_j = t$ and $\a \ge \b = \sum_{j=1}^s \beta_j \b_j$. We may assume that $\b_1,\b_2$ are exponents in $\NP(I)$ corresponding to edges $e_1,e_2$ of $(G,\w)$ with nontrivial weights $\w(e_1) = a$ and $\w(e_2) = b$. There are two cases.  

\smallskip

Case 1. $e_1$ and $e_2$ have no common vertices. We may assume that $e_1 = \{1,2\}$ and $e_2 = \{3,4\}$. Since $(G,\w)$ has no $F_3$-minor, there is a trivial edge in $(G,\w)$ connecting $e_1$ and $e_2$. We may assume that $\{2,3\} \in E(G)$ is a trivial edge in $(G,\w)$ and $\b_3$ is the corresponding exponent in $\NP(I)$. Note that, $\b_1= a(\e_1+\e_2), \b_2=b(\e_3+\e_4), \b_3=\e_2+\e_3$. We may further assume that $\beta_1 a \le \beta_2 b$. Let $\gamma_1 = 0$, $\gamma_2 = (\beta_2 b - \beta_1a)/b$, $\gamma_3 = \beta_3 + \beta_1 a$. Then
\begin{align*}
    \c &= \gamma_1 \b_1 + \gamma_2 \b_2 + \gamma_3 \b_3 = \beta_2 \b_2- \beta_1a(\e_3+\e_4) +(\beta_3 + \beta_1a)(\e_2+\e_3)\\
    &=\beta_2\b_2+ \beta_3\b_3-\beta_1a\e_4 + \beta_1 a \e_2 \le \beta_1 \b_1 + \beta_2 \b_2 + \beta_3 \b_3.
\end{align*}
Furthermore, 
$$\gamma_1 + \gamma_2 + \gamma_3 = \beta_2 + \beta_3 + \beta_1(a - a/b) \ge \beta_1 + \beta_2 + \beta_3.$$
Since $\gamma_1 = 0$, by Lemma \ref{lem_criterion_integral_k}, we deduce that $x^\a \in \overline{I(G',\w')^t}$, where $G' = G \backslash e_1$ and $\w'$ is the induced weight function on the edges of $G'$. By Lemma \ref{lem_drop_non_trivial_edge}, we have that $(G',\w')$ has no $F_i$-minors for all $i=1,\ldots,5$. By induction on the number of edges, we deduce that $x^\a \in I(G',\w')^t \subseteq I^t$.

\smallskip

Case 2. $e_1$ and $e_2$ have a common vertex. We may assume that $e_1 = \{1,2\}$ and $e_2 = \{1,3\}$. Since $(G,\w)$ has no $F_1$-minor, we deduce that $e_3 = \{2,3\}$ is an edge of $G$ with $\w(e_3) = 1$. Note that, $\b_1= a(\e_1+\e_2), \b_2=b(\e_1+\e_3), \b_3=\e_2+\e_3$. As in case 1, we may further assume that $\beta_1 a \le \beta_2 b$. Let $\gamma_1 = 0$, $\gamma_2 = (\beta_2 b - \beta_1a)/b$, $\gamma_3 = \beta_3 + \beta_1 a$. Then
\begin{align*}
    \c &= \gamma_1 \b_1 + \gamma_2 \b_2 + \gamma_3 \b_3 = \beta_2 \b_2- \beta_1a(\e_1+\e_3) +(\beta_3 + \beta_1a)(\e_2+\e_3)\\
    &=\beta_2\b_2+ \beta_3\b_3-\beta_1a\e_1 + \beta_1 a \e_2 \le \beta_1 \b_1 + \beta_2 \b_2 + \beta_3 \b_3.
\end{align*} 
We may now proceed as in Case 1 to obtain the reduction.

\medskip

\noindent \textbf{Step 3.} Conclusion step. By Step 1, Step 2, and Lemmas \ref{lem_drop_non_trivial_edge}, \ref{lem_drop_trivial_even_cycle}, \ref{lem_drop_non_trivial_even_cycle}, we may assume that $G$ is compact with only one nontrivial edge. The conclusion follows from Lemma \ref{lem_normal_compact}.
\end{proof}

\subsection*{Acknowledgments} Guangjun Zhu is supported by the Natural Science Foundation of Jiangsu Province (No. BK20221353).

\end{document}